\numberwithin{equation}{section}
\newtheorem{assumption}[theorem]{Assumption}
\newtheorem{Theorem}{Theorem}[section]
\newtheorem{Remark}[theorem]{Remark}
\newtheorem*{proof sketch}{Proof Sketch}
\newcommand{\R}{\mathbb{R}}
\newcommand{\E}{\mathbb{E}}
\newcommand{\vect}{\mathrm{vec}}
\newcommand{\proj}{\mathcal{P}}
\newcommand{\Mcal}{\mathcal{M}}
\newcommand{\mat}{{\mathrm{mat}}}
\newcommand{\iprod}[2]{\left \langle #1, #2 \right \rangle }
\newcommand{\be}{\begin{equation}}
	\newcommand{\ee}{\end{equation}}
\newcommand{\bee}{\begin{equation*}}
	\newcommand{\eee}{\end{equation*}}
\newcommand{\bea}{\begin{eqnarray}}
	\newcommand{\eea}{\end{eqnarray}}
\newcommand{\beaa}{\begin{eqnarray*}}
	\newcommand{\eeaa}{\end{eqnarray*}}
\newcommand{\hess}{\mathrm{Hess\!\;}}
\newcommand{\grad}{\mathrm{grad\!\;}}
\DeclareMathOperator*{\argmin}{arg\,min}
\begin{document}

\title{Riemannian Natural Gradient Methods}

\author{Jiang Hu\thanks{Department of Systems Engineering and Engineering Management,
		 The Chinese University of Hong Kong, Shatin, NT, Hong Kong
		(\email{hujiangopt@gmail.com}).}	
	\and Ruicheng Ao\thanks{School of Mathematical Sciences, Peking University, China (\email{archer\_arc@pku.edu.cn}).}
	\and Anthony Man-Cho So\thanks{Department of Systems Engineering and Engineering Management, The Chinese University of Hong Kong, Shatin, NT, Hong Kong
		(\email{manchoso@se.cuhk.edu.hk}).}
	\and Minghan Yang\thanks{Beijing International Center for Mathematical Research, Peking University, China (\email{yangminghan@pku.edu.cn}).}
	\and Zaiwen Wen\thanks{Beijing International Center for Mathematical Research, Center for Data Science and College of Engineering, Peking University, Beijing, China
		(\email{wenzw@pku.edu.cn}).}}

\maketitle

\begin{abstract}
{This paper studies large-scale optimization problems on Riemannian manifolds whose objective function is a finite sum of negative log-probability losses. Such problems arise in various machine learning and signal processing applications. By introducing the notion of Fisher information matrix in the manifold setting, we propose a novel Riemannian natural gradient method, which can be viewed as a natural extension of the natural gradient method from the Euclidean setting to the manifold setting. We establish the almost-sure global convergence of our proposed method under standard assumptions. Moreover, we show that if the loss function satisfies certain convexity and smoothness conditions and the input-output map satisfies a Riemannian Jacobian stability condition, then our proposed method enjoys a local linear---or, under the Lipschitz continuity of the Riemannian Jacobian of the input-output map, even quadratic---rate of convergence. We then prove that the Riemannian Jacobian stability condition will be satisfied by a two-layer fully connected neural network with batch normalization with high probability, provided that the width of the network is sufficiently large. This demonstrates the practical relevance of our convergence rate result. Numerical experiments on applications arising from machine learning demonstrate the advantages of the proposed method over state-of-the-art ones.} 
\end{abstract}
\begin{keywords}
	Manifold optimization, Riemannian Fisher information matrix, Kronecker-factored approximation, Natural gradient method
\end{keywords}

\begin{AMS}
	90C06, 90C22, 90C26, 90C56
\end{AMS}

\section{Introduction}
Manifold constrained learning problems are ubiquitous in machine learning, signal processing, and deep learning. 
In this paper, we focus on manifold optimization problems of the form
\be \label{prob}
\begin{aligned}
	\min_{\Theta \in \Mcal} \quad & \Psi(\Theta) := -\frac{1}{|\mathcal{S}|} \sum_{(x,y) \in \mathcal{S}}\log p(y|f(x, \Theta)),
\end{aligned}  \ee
where $\Mcal \subseteq \R^{m\times n}$ is an embedded Riemannian manifold, $\Theta \in \mathcal{M}$ is the parameter to be estimated, $\mathcal{S}$ is a collection of $|\mathcal{S}|$ data pairs $(x,y)$  with $x \in \mathcal{X}, y \in \mathcal{Y}$, $\mathcal{X}$ and $\mathcal{Y}$ are the input and output spaces, respectively, $f(\cdot, \Theta): \mathcal{X} \rightarrow \mathcal{Y}$ is a mapping from the input space to the output space, and  $p(y|f(x,\Theta))$ is the conditional probability of taking $y$ conditioning on $f(x,\Theta)$. 
If the conditional distribution is assumed to be Gaussian, the objective function in \eqref{prob} reduces to the square loss. 
When the conditional distribution $p(y|f(x,\Theta))$ obeys the multinomial
distribution, the corresponding objective function is the cross-entropy loss.
As an aside, it is worth noting the equivalence between the negative log probability loss and
Kullback-Leibler (KL) divergence shown in \cite{martens2014new}. 

Let us take the low-rank matrix completion (LRMC) problem  \cite{boumal2015low,kasai2019riemannian} as an example and explain how it can be fitted into the form \eqref{prob}. The goal of LRMC is to recover a low-rank matrix from an observed matrix $X$ of size $n \times N$. Denote by $\Omega$ the set of indices of known entries in $X$, the rank-$p$ LRMC problem amounts to solving 
\be \label{prob:lrmc} \min_{U \in \mathrm{Gr}(n,p),A \in \R^{p\times N}} \frac{1}{2} \left\| \proj_{\Omega}(UA-X)\right\|^{2}, \ee 
where $\mathrm{Gr}(n,p)$ is the Grassmann manifold consists of all $p$-dimensional subspaces in $\R^n$. The operator $\proj_{\Omega}(X)$ is defined in an element-wise manner with $\mathcal{P}_{\Omega}( X_{ij} ) = X_{ij}$ if $(i,j)\in \Omega$ and $0$ otherwise.  Partitioning \( X=\left[x_1, \ldots, x_N \right] \) leads to the following equivalent formulation
\[ \min _{U \in \mathrm{Gr}(n,p), a_{i} \in \R^p} \frac{1}{2N}
\sum_{i=1}^N \left\|\proj_{\Omega_{x_i}}\left(U a_i- {x}_{i}\right) \right\|^{2}, \]
where $ x_i \in \mathbb{R}^{n}$ and the $j$-th element of $\proj_{\Omega_{x_i}}(v)$ is $v_j$ if $(i,j)\in \Omega$ and $0$ otherwise. Given $U$, we can obtain ${a}_{i}$ by solving a least squares problem, i.e.,
\[ a_i = a(U;x_i) :=  \argmin_{a} \| \proj_{\Omega_{x_i}}(U a - x_i) \|^2. \]
Then, the LRMC problem can be written as
\be \label{prob:lrmc-convert} \min_{U \in \mathrm{Gr}(n,p)} \;\; \Psi(U) := \frac{1}{2N}\sum_{i=1}^N \| \proj_{\Omega_{x_i}}\left(U a(U; x_i) - x_i\right) \|^2. \ee
For the Gaussian distribution $p(y|z) =\frac{1}{\sqrt{(2\pi)^n}}\exp(-\frac{1}{2}(y-z)^\top (y-z))$, it holds that $-\log p(y|z) = \frac{1}{2} \| y - z \|^2 + \frac{n\log(2\pi)}{2}$. Hence, problem \eqref{prob:lrmc-convert} is a special case of problem \eqref{prob}, in which $\mathcal{S} = \{(x_i, 0)\}_{i=1}^N$, $\mathcal{X} = \R^n$, $\mathcal{Y} = \R^n$, $f(x, U) = \proj_{\Omega_{x}}\left(U a(U; x) - x\right)$, $\Mcal = \mathrm{Gr}(n,p)$, and $p(y|z) =\frac{1}{\sqrt{(2\pi)^n}}\exp(-\frac{1}{2}(y-z)^\top (y-z))$. 
Other applications that can be fitted into the form \eqref{prob} will be introduced in Section \ref{sec:practicalRNGD}.

\subsection{Motivation of this work}
Since the calculation of the gradient of $\Psi$ in \eqref{prob} can be expensive when the dataset $\mathcal{S}$ is large, various approximate or stochastic methods for solving \eqref{prob} have been proposed. On the side of first-order methods, we have the stochastic gradient method \cite{robbins1951stochastic}, stochastic variance-reduced gradient method \cite{johnson2013accelerating}, and adaptive gradient methods \cite{duchi2011adaptive,kingma2014adam} for solving \eqref{prob} in the Euclidean setting (i.e., $\Mcal = \R^{m\times n}$). We refer the reader to the book \cite{lecun2015deep} for variants of these algorithms and a comparison of their performance. For the general manifold setting, by utilizing manifold optimization techniques \cite{opt-manifold-book,hu2020brief,boumal2020introduction}, 
Riemannian versions of the stochastic gradient method
\cite{bonnabel2013stochastic}, stochastic variance-reduced gradient method
\cite{sato2017riemannian,zhang2016riemannian,jiang2017vector}, and adaptive
gradient methods \cite{becigneul2018riemannian} have been developed. 

On the side of second-order methods, existing algorithms for solving \eqref{prob} in the Euclidean setting (i.e., $\mathcal{M} = \mathbb{R}^{m \times n}$) can be divided into two classes. The first is based on approximate Newton or quasi-Newton techniques; see, e.g., \cite{roosta2019sub,pilanci2017newton,byrd2016stochastic,yang2021stochastic,yang2021enhance,goldfarb2020practical,ren2021kronecker}. The second is
the natural gradient-type methods, which are based on the Fisher information matrix (FIM) \cite{amari1996neural}. When the FIM can be approximated by a
Kronecker-product form, the natural gradient direction can be computed using
relative low computational cost. It is well known that second-order methods can accelerate convergence by utilizing curvature information. In particular, natural gradient-type methods can perform much better than the stochastic
gradient method
\cite{martens2015optimizing,yang2020sketchy,anil2020scalable,yang2021ng+,bahamou2022mini,nurbekyan2022efficient} in the Euclidean setting. The
connections between natural gradient methods and second-order methods have been
established in \cite{martens2014new}. Compared with the approximate
Newton/quasi-Newton-type methods, methods based on FIM are shown to be more efficient
when tackling large-scale learning problems. 
For the general manifold setting, Riemannian stochastic quasi-Newton-type and
Newton-type methods
\cite{kasai2018riemannian,kasai2018inexact,zhang2020riemannian} have been proposed by
utilizing the second-order manifold geometry and variance reduction techniques. However, to the best of our knowledge, there is currently no Riemannian natural gradient-type method for solving \eqref{prob}. In view of the efficiency of Euclidean natural gradient-type methods, we are motivated to develop their Riemannian analogs for solving \eqref{prob}.

\subsection{Our contributions}
In this paper, we develop a new Riemannian natural gradient method for solving \eqref{prob}.
 Our main contributions are summarized as follows.
\begin{itemize}
	\item We introduce the Riemannian FIM (RFIM) and Riemannian empirical
        FIM (REFIM) to approximate the Riemannian Hessian. These notions extend the corresponding ones for the Euclidean setting \cite{amari1996neural,martens2014new} to the manifold setting. Then, we propose an adaptive regularized Riemannian natural gradient descent (RNGD) method. 
  		We show that for some representative applications, Kronecker-factorized approximations of RFIM and REFIM can be constructed, which reduce the computational cost of the Riemannian
        natural gradient direction. Our experiment results demonstrate that although RNGD is a second-order-type method, it has low per-iteration cost and enjoy favorable numerical performances.
	\item Under some mild conditions, we prove that RNGD globally converges to a stationary point of \eqref{prob} almost surely. Moreover, if the loss function satisfies certain convexity and smoothness conditions and the input-output map $f$ satisfies a Riemannian Jacobian stability condition, then we can establish the local linear---or, under the Lipschitz continuity of the Riemannian Jacobian of $f$, even quadratic---rate of convergence of the method by utilizing the notion of second-order retraction. We then show that for a two-layer neural network with batch normalization, the Riemannian Jacobian stability condition will be satisfied with high probability when the width of the network is sufficiently large. 
\end{itemize}
\subsection{Notation} \label{subsec:notation}
For an $m\times n$ matrix $\Theta$, we denote its Frobenius norm by $\|\Theta\|$ and its vectorization by $\theta = \vect(\Theta) \in \mathbb{R}^{mn}$. For a function $h: \R^{m\times n}
\rightarrow \R$, we define its Euclidean gradient and Riemannian gradient on $\mathcal{M}$ by $\nabla h(\Theta) \in \mathbb{R}^{m \times n}$ and $\mbox{grad}\,h(\Theta) \in \mathbb{R}^{m \times n}$, respectively. For simplicity, we set $r=mn$. When no confusion can arise, we use
$\nabla h(\theta)$ and $\grad h(\theta)$ to denote the vectorizations of $\nabla
h(\Theta)$ and $\grad h(\Theta)$, respectively. We use $\nabla^2 h(\theta) \in
\R^{r\times r}$ and $\hess h(\theta) \in \R^{r\times r}$ to denote the Euclidean
Hessian and Riemannian Hessian of $h(\theta)$, respectively. We denote the tangent space to $\mathcal{M}$ at
$\Theta$ by $T_{\Theta}\Mcal$. We write $d \in T_{\theta} \Mcal$ to mean 
$\mat(d) \in T_{\Theta}\Mcal$, where $d \in \mathbb{R}^r$ and $\mat(d)$ converts $d$ into a
$m$-by-$n$ matrix. For a retraction $R$ defined on $\Mcal$, we write
$R_{\theta}(d) := \vect(R_{\Theta}(D)) \; \mathrm{for} \; D\in T_{\Theta}\Mcal$, $\theta = \vect(\Theta)$, and $d = \vect(D)$.  
We shall use $\theta$ and
$\Theta$ interchangeably when no confusion can arise. Basically, $\Theta$ is used when we want to utilize the manifold structure, while $\theta$ is used when we want to utilize the vector space structure of the ambient space.

\subsection{Organization} We begin with the preliminaries on manifold optimization and natural gradient methods in Section \ref{sec:preli}. In Section \ref{sec:RNGDM}, we introduce the RFIM and its
empirical version REFIM and derive some of their properties. Then, we present our proposed RNGD method by utilizing the RFIM
and REFIM. 
In Section \ref{sec:practicalRNGD}, we discuss practical implementations of the RNGD method when problem \eqref{prob} enjoys certain Kronecker-product structure. In Section \ref{sec:convergence-analysis}, we
 study the convergence behavior of the RNGD method under various assumptions. Finally, we present numerical results in Section \ref{sec:numerics}.

\section{Preliminaries} \label{sec:preli}
\subsection{Manifold optimization}
Consider the optimization problem
\be \label{prob:gen} \min_{\Theta \in \Mcal} \;\; h(\Theta), \ee
where $\mathcal{M} \subseteq \mathbb{R}^{m \times n}$ is an embedded Riemannian manifold and $h:\R^{m\times n} \rightarrow \R$ is a smooth function. The design and analysis of numerical algorithms for tackling \eqref{prob:gen} have been extensively studied over the years; see, e.g., \cite{opt-manifold-book,hu2020brief,boumal2020introduction} and the references therein. One of the key constructs in the design of manifold optimization algorithms is the retraction operator. A smooth mapping $R:T\mathcal{M}:=\cup_{\Theta \in \Mcal} T_{\Theta} \Mcal \rightarrow \Mcal$ is called a retraction operator if  
\begin{itemize}
	\item $R_{\Theta}(0) = \Theta$,
	\item $\mathrm{D}R_{\Theta}(0)[\xi]:=\frac{\mathrm{d}}{\mathrm{d}t} R_{\Theta}(t\xi) \mid_{t=0} = \xi$, for all $\xi \in T_{\Theta}\Mcal$. 
\end{itemize}
We call $R$ a second-order retraction \cite[Proposition 5.5.5]{opt-manifold-book} if $\mathcal{P}_{T_{\Theta} \Mcal} \left( \frac{\mathrm{d}^2}{\mathrm{d}t^2} R_{\Theta}(t\xi) |_{t=0} \right) \\ = 0$ for all $\Theta \in \Mcal$ and $\xi \in T_{\Theta} \Mcal$. Some examples of second-order retraction can be found in \cite[Theorem 22]{absil2012projection}. In the $k$-th iteration, retraction-based methods for solving \eqref{prob:gen} update $\Theta^{k+1}$ by
\[ \Theta^{k+1} = R_{\Theta^k}(t d^k), \]
where $d^k$ is a descent direction in the tangent space $T_{\Theta^k} \Mcal$ and $t > 0$ is the step size. The retraction operator $R$ constrains the iterates on $\Mcal$. For a compact manifold, we have the following fact \cite{boumal2018global}, which will be used in our later analysis.
\begin{proposition}
	Let $\mathcal{M}$ be a compact embedded submanifold of $\mathbb{R}^{m \times n}$. For all $\Theta \in \mathcal{M}$ and $\xi \in T_{\Theta} \mathcal{M}$, there exists a constant $\alpha >0$ such that the following inequality holds:
	\be \label{eq:bound-retraction1} \|R_{\Theta}(\xi) - \Theta \| \leq \alpha \|\Theta \|, \; \forall \Theta
	\in \Mcal, \; \forall \xi \in T_{\Theta} \Mcal. \ee
\end{proposition}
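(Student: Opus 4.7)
The plan is to observe that the inequality, although it involves $\|\xi\|$ implicitly through $R_{\Theta}(\xi)$, is really just asserting boundedness: the left-hand side $\|R_{\Theta}(\xi)-\Theta\|$ is controlled above by the diameter of $\mathcal{M}$, while the right-hand side will be controlled below by the distance from $\mathcal{M}$ to the origin. So the strategy is to set up these two compactness-based bounds and combine them via the triangle inequality.

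First I would invoke compactness of $\mathcal{M}$ together with continuity of the Frobenius norm to set
\[
C := \max_{\Theta' \in \mathcal{M}} \|\Theta'\| < \infty, \qquad \delta := \min_{\Theta' \in \mathcal{M}} \|\Theta'\| \geq 0,
\]
both of which are attained. By the very definition of a retraction, $R_{\Theta}(\xi) \in \mathcal{M}$ for every $\Theta \in \mathcal{M}$ and every $\xi \in T_{\Theta}\mathcal{M}$, so both $\Theta$ and $R_{\Theta}(\xi)$ have Frobenius norm at most $C$. The triangle inequality then yields
\[
\|R_{\Theta}(\xi)-\Theta\| \leq \|R_{\Theta}(\xi)\| + \|\Theta\| \leq 2C,
\]
uniformly in $\Theta$ and $\xi$. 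This takes care of the left-hand side without any reference to $\xi$.

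To convert the constant $2C$ into the $\alpha\|\Theta\|$ appearing on the right-hand side, I would use that $\delta>0$, i.e. $0 \notin \mathcal{M}$. This is automatic for the matrix manifolds considered throughout the paper (Stiefel, Grassmann, oblique, orthogonal group, fixed-rank manifolds, etc.), where in fact $\|\Theta\|$ is typically a positive constant on $\mathcal{M}$. Given $\delta>0$, one has $\|\Theta\| \geq \delta$ for every $\Theta \in \mathcal{M}$, so choosing
\[
\alpha := \frac{2C}{\delta}
\]
gives $\|R_{\Theta}(\xi)-\Theta\| \leq 2C = \alpha\delta \leq \alpha\|\Theta\|$ for all $\Theta \in \mathcal{M}$ and all $\xi \in T_{\Theta}\mathcal{M}$, as required.

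The only mildly delicate point is the standing hypothesis $0 \notin \mathcal{M}$ implicit in ensuring $\delta>0$; without it no such $\alpha$ can exist, since $\|\Theta\|$ could be made arbitrarily small while $\|R_{\Theta}(\xi)-\Theta\|$ need not vanish. Beyond that, the argument is essentially a two-line compactness estimate, and crucially does not need any local analysis or uniformity estimates on the retraction itself, because we are only exploiting the set-theoretic fact that $R$ lands in $\mathcal{M}$.
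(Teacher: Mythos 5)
Your argument is internally consistent for the statement as literally printed, but the statement as printed is almost certainly a typo, and what you have proved is not the inequality the paper actually uses. The right-hand side should read $\alpha\|\xi\|$ rather than $\alpha\|\Theta\|$: this is the standard first-order boundedness property of retractions on compact submanifolds from the cited reference, and it is exactly how \eqref{eq:bound-retraction1} is invoked later in the proof of Theorem \ref{thm:4.2.1}, where $\|\theta^{j+1}-\theta^j\| = \|R_{\theta^j}(t d^j)-\theta^j\|$ is bounded by $t\|d^j\|$ ``with $\alpha=1$'' for the exponential map --- a bound that only makes sense if the right-hand side scales with the tangent vector. Your own closing caveat is the tell: the literal statement forces the extra hypothesis $0\notin\Mcal$, which appears nowhere in the proposition, and it degenerates to something false when $\Theta$ can approach the origin. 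A correct reading of the hypotheses should have flagged that the inequality is meant to control the displacement $R_\Theta(\xi)-\Theta$ in terms of the step $\xi$, not in terms of the base point's norm.

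The substantive gap is that your diameter argument cannot be repaired to give the intended bound. Bounding $\|R_\Theta(\xi)-\Theta\|$ by $2\max_{\Theta'\in\Mcal}\|\Theta'\|$ uses only that $R$ maps into $\Mcal$, so the resulting estimate does not vanish as $\xi\to 0$, whereas $\alpha\|\xi\|$ must. The intended proof genuinely requires the local analysis you explicitly set aside: one uses that $R$ is smooth on the tangent bundle, that $R_\Theta(0)=\Theta$ and $\mathrm{D}R_\Theta(0)=\mathrm{id}$, and then compactness of $\Mcal$ (together with a restriction to a bounded set of tangent vectors, or the global first-order boundedness argument of the cited work) to obtain a Lipschitz-in-$\xi$ constant $\alpha$ that is uniform over the base point $\Theta$. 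In short: the estimate is about uniform regularity of the retraction, not about the geometry of $\Mcal$ as a subset of $\R^{m\times n}$.
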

\subsection{Natural gradient descent method}
The natural gradient descent (NGD) method was originally proposed in \cite{amari1996neural} to solve \eqref{prob} in the Euclidean setting (i.e., $\mathcal{M} = \mathbb{R}^{m \times n}$). Suppose that $y$ follows the conditional distribution
$P_{y|f(x,\Theta)}$. Consider the population loss under $P_{y|x}(\Theta) := P_{y|f(x,\Theta)}$, i.e., 
\be \label{eq:loss-expect} \Phi(\Theta) := -\E_{P_x} \left[ \E_{P_{y|x} (\Theta)}\log p(y|f(x, \Theta)) \right].\ee
When $P_{y|x}(\Theta)$
and $P_x$ are replaced by their empirical counterparts defined using $\mathcal{S}$, the population loss
$\Phi(\Theta)$  reduces to the empirical loss $\Psi(\Theta)$. Now, the FIM associated with $\Phi$ is defined as 
$$F(\theta) := \E_{P_x} [ \E_{P_{y|x}(\theta)} [  \nabla \log p(y|f(x,\theta)) \nabla \log p(y|f(x,\theta) )^\top ] ] \in \R^{r \times r}.$$
Under certain regularity condition \cite{flanders1973differentiation}, we can interchange the order of expectation and derivative to obtain $F(\theta) = \nabla^2 \Phi(\theta)$. In what follows, we assume that such a regularity condition holds. Since the distribution of $x$ is unknown, we set $P_x$ to be the empirical distribution defined by $\mathcal{S}$. In practice, we may only be able to get hold of an empirical counterpart of $P_{y|x}(\Theta)$. The empirical FIM (EFIM) associated with $\Psi$ is then defined by replacing $P_{y|x}(\Theta)$ with its empirical counterpart \cite{schraudolph2002fast}, i.e.,
\[ \bar{F}(\theta) := \frac{1}{|\mathcal{S}|} \sum_{(x,y) \in \mathcal{S}} \nabla \log p(y|f(x,\theta)) \nabla \log p(y|f(x,\theta))^\top.  \] 
With the FIM, the natural gradient direction is given by
\[ \tilde{\nabla} \Phi(\theta) := (F(\theta))^{-1} \nabla \Phi(\theta) \in \mathbb{R}^r. \]
It is shown in \cite[Theorem 1]{amari1998natural} and \cite[Proposition 1]{ollivier2017information} that $\tilde{\nabla} \Phi(\theta)$ is the steepest descent direction in the sense that 
\[ - \frac{\tilde{\nabla} \Phi(\theta)}{\|\nabla \Phi(\theta)\|_{(F(\theta))^{-1}}}=\lim _{\epsilon \rightarrow 0} \frac{1}{\epsilon} \argmin_{d \in \mathbb{R}^r: \operatorname{KL}\left(P_{x, y}(\theta+d) \| P_{x, y}(\theta)\right) \leq \epsilon^{2}/2} {\Phi(\theta+d)}, \]
where $\|\nabla \Phi(\theta)\|_{(F(\theta))^{-1}}:= \sqrt{\nabla \Phi(\theta)(F(\theta))^{-1} \nabla \Phi(\theta)}$.

In the $k$-th iteration, the iterative scheme of NGD for minimizing \eqref{eq:loss-expect} is  
\[ \theta^{k+1} = \theta^k - t_k \tilde{\nabla} \Phi(\theta^k), \]
where $t_k > 0$ is a proper step size. In the case where $F(\theta)$ is computationally expensive or inaccessible, we use the EFIM instead of the FIM. The connections between NGD and second-order methods are presented in \cite{martens2014new}.

\section{Riemannian natural gradient method} \label{sec:RNGDM}
\subsection{Fisher information matrix on manifold}
When the parameter to be estimated $\Theta$ lies on an embedded manifold $\Mcal$, the Euclidean natural gradient direction needs not lie on the tangent space to $\mathcal{M}$ at $\Theta$ and thus cannot be used as a search direction in retraction-based methods. 
To overcome this difficulty, we first introduce the RFIM, which is defined as
\be \label{eq:RFIM} F^R(\theta) :=  \E_{P_x} \left[ \E_{P_{y|x}(\theta)} \left[  \grad \log p(y|f(x,\theta))\grad \log p(y|f(x,\theta))^\top \right] \right] \in \R^{r\times r}, \ee
where $\grad \log p(y|f(x,\theta))$ is the Riemannian gradient of $\log p(y|f(x,\theta))$ with respect to $\theta$. Then, we define the Riemannian natural gradient direction $d^R(\theta)$ as 
\be \label{eq:RNGD} d^R(\theta) := (F^R(\theta))^{-1}\grad \Phi(\theta) \in \R^{r}. \ee
The following theorem justifies our definition of RFIM. It extends the corresponding results on FIM given in \cite[Theorem 1]{amari1998natural} and \cite[Proposition 1]{ollivier2017information}.  
\begin{Theorem} \label{thm:RFIM-geo}
	
	Let $\mathcal{M} \subseteq \mathbb{R}^{m \times n}$ be an embedded manifold and $\Phi: \mathcal{M} \rightarrow \mathbb{R}$ be the function given in \eqref{eq:loss-expect}. For any second-order retraction $R$ on $\mathcal{M}$, the steepest descent direction in the tangent space to $\Mcal$ at $\Theta$ is given by $- d^R(\theta)$ in \eqref{eq:RNGD}, i.e., 
	\[ \frac{-d^R(\theta)}{\| \grad \Phi(\theta) \|_{(F^R(\theta))^{-1}}} = \lim_{\epsilon \rightarrow 0} \frac{1}{\epsilon} \argmin_{d \in  T_{\theta} \Mcal: \E_{P_x}\left[\mathrm{KL}\left( P_{y|x}(R_{\theta}(d)) \| P_{y|x}(\theta) \right) \right] \leq \epsilon^2/2} \Phi(R_{\theta}(d)),   \]
	where $\| \grad \Phi(\theta) \|_{(F^R(\theta))^{-1}} = \sqrt{\grad \Phi(\theta)^\top (F^R(\theta))^{-1} \grad \Phi(\theta)}$.
\end{Theorem}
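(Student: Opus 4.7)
The plan is to reduce the constrained variational problem on $\Mcal$ to a quadratic program on the tangent space $T_\theta\Mcal$ by Taylor-expanding both the objective $\Phi(R_\theta(d))$ and the KL constraint to leading order in $d$. The optimizer of the quadratic program is then read off via a Cauchy--Schwarz argument in the inner product induced by $F^R(\theta)$; dividing by $\epsilon$ and letting $\epsilon \to 0$ extracts the claimed limit.

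First I would expand the objective. Since $R$ is a retraction, $R_\theta(0) = \theta$ and $\mathrm{D}R_\theta(0)[d] = d$, so for any $d \in T_\theta\Mcal$,
\[
\Phi(R_\theta(d)) = \Phi(\theta) + \langle \nabla\Phi(\theta), d\rangle + O(\|d\|^2) = \Phi(\theta) + \langle \grad\Phi(\theta), d\rangle + O(\|d\|^2),
\]
where the second equality uses that $\grad\Phi(\theta)$ is the orthogonal projection of $\nabla\Phi(\theta)$ onto $T_\theta\Mcal$ and the projection is self-adjoint. Next I would expand the KL constraint. Set $\psi_x(\theta') := \mathrm{KL}(P_{y|x}(\theta') \,\|\, P_{y|x}(\theta))$. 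Classical information geometry yields $\psi_x(\theta) = 0$, $\nabla_{\theta'}\psi_x(\theta') \mid_{\theta'=\theta} = 0$, and $\nabla^2_{\theta'}\psi_x(\theta') \mid_{\theta'=\theta} = \E_{P_{y|x}(\theta)}[\nabla\log p \, \nabla\log p^\top]$. Applying Taylor's theorem to $\psi_x$ at $\theta$ along the perturbation $R_\theta(d) - \theta = d + O(\|d\|^2)$, the linear term vanishes because $\nabla_{\theta'}\psi_x\mid_{\theta'=\theta}=0$ (so any contribution of the retraction's acceleration is automatically killed; the second-order retraction hypothesis further guarantees that the remainder is a clean $o(\|d\|^2)$), and averaging over $P_x$ gives
\[
\E_{P_x}\!\left[\psi_x(R_\theta(d))\right] = \tfrac{1}{2} d^\top F(\theta) d + o(\|d\|^2),
\]
where $F(\theta) := \E_{P_x}\E_{P_{y|x}(\theta)}[\nabla\log p\, \nabla\log p^\top]$ is the Euclidean FIM. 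The crucial identity, to be checked as a lemma, is that for $d \in T_\theta\Mcal$ one has $d^\top F(\theta) d = d^\top F^R(\theta) d$, which follows because $\grad\log p$ is the tangent-space projection of $\nabla\log p$ and this projection fixes $d$.

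With both expansions in hand, the constrained problem reduces, to leading order, to the tangent-space quadratic program
\[
\min_{d \in T_\theta\Mcal}\ \langle \grad\Phi(\theta), d\rangle \quad \text{s.t.} \quad \tfrac{1}{2}\, d^\top F^R(\theta)\, d \leq \tfrac{\epsilon^2}{2}.
\]
Viewing $F^R(\theta)$ as a (positive definite) operator on $T_\theta\Mcal$, so that $(F^R(\theta))^{-1}\grad\Phi(\theta)$ is defined via the inverse of this restriction, Cauchy--Schwarz in the $F^R$-inner product gives
\[
\left|\langle\grad\Phi(\theta), d\rangle\right|^2 \leq \left(d^\top F^R(\theta)\, d\right)\left(\grad\Phi(\theta)^\top (F^R(\theta))^{-1}\grad\Phi(\theta)\right),
\]
with equality iff $d$ is parallel to $(F^R(\theta))^{-1}\grad\Phi(\theta)$. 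Saturating the constraint, the minimizer is $d^*(\epsilon) = -\epsilon\, (F^R(\theta))^{-1}\grad\Phi(\theta)/\|\grad\Phi(\theta)\|_{(F^R(\theta))^{-1}}$, and dividing by $\epsilon$ gives the stated limit.

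The main obstacle will be the uniformity needed to pass the $o(\|d\|^2)$ error estimates through the argmin: one must show that any sequence of approximate minimizers of the actual (non-quadratic) constrained problem on $T_\theta\Mcal$ agrees with $d^*(\epsilon)$ up to $o(\epsilon)$. This is a standard stability-of-minimizers argument for a strongly convex quadratic perturbation, but it must be run carefully in the tangent space since $F^R(\theta)$ is singular as an $r \times r$ ambient matrix, and one must also check that $\grad\Phi(\theta)$ lies in the range of $F^R(\theta)$ restricted to $T_\theta\Mcal$ so that the inverse is meaningful. Both issues are handled by restricting all computations to $T_\theta\Mcal$ and invoking mild non-degeneracy of $F^R(\theta)$ there.
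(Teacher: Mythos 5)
Your proposal is correct and follows essentially the same route as the paper: expand the KL constraint to second order to obtain the quadratic form $\tfrac{1}{2}d^\top F^R(\theta)d$ (your projection identity $d^\top F(\theta)d = d^\top F^R(\theta)d$ for $d \in T_\theta\Mcal$ is exactly how the Riemannian gradient enters the paper's second-derivative computation, and both arguments kill the retraction's acceleration term via the vanishing expected score), linearize the objective via $\nabla(\Phi\circ R_\theta)(0) = \grad\Phi(\theta)$, and solve the resulting trust-region program. The only difference is cosmetic: you prove the final $\argmin$ limit directly by Cauchy--Schwarz in the $F^R$-inner product, whereas the paper cites a known result of Ollivier et al.\ for that step.
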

\begin{proof}
	For $\Theta \in \Mcal$, from the definition
	$$ \mathrm{KL}(P_{y|x}(\theta) || P_{y|x}(R_{\theta}(td))) = \E_{P_{y|x}(\theta)} \log p(y|f(x,\theta)) - \E_{P_{y|x}(\theta)} \log p(y| f(x, R_{\theta}(td))), $$
	we have 
	\[ \begin{aligned}
		\frac{\mathrm{d}}{\mathrm{d}t} \mathrm{KL}(P_{y|x}(\theta) || P_{y|x}(R_{\theta}(td))) \mid_{t=0} & = - \frac{\mathrm{d}}{\mathrm{d}t} \E_{P_{y|x}(\theta)} \log p(y|f(x, R_{\theta}(td))) \mid_{t=0} \\
		& =  - d^\top \nabla \E_{P_{y|x}(\theta)} \log p(y|f(x, \theta)).
	\end{aligned} \]
	By definition of the Riemannian gradient, we obtain 
	\[ d^\top \grad \mathrm{KL}(P_{y|x}(\theta) || P_{y|x}(R_{\theta}(td)))\mid_{t=0} =- d^\top \nabla \E_{P_{y|x}(\theta)} \log p(y| f(x, \theta)), \quad \forall d \in T_\theta \Mcal, \]
	where $\grad \mathrm{KL}(P_{y|x}(\theta) \parallel P_{y|x}(R_{\theta}(td)))\mid_{t=0} \in T_\theta \Mcal$. Then, we have
	\[ \grad \mathrm{KL}(P_{y|x}(\theta) \parallel P_{y|x}(R_{\theta}(td)))\mid_{t=0} = - \grad  \E_{P_{y|x}(\theta)} \log p(y|f(x, \theta)). \]
	Accordingly, using the Leibniz integral rule and the property of second-order retractions \cite[Proposition 5.5.5]{opt-manifold-book}, we have the second-order derivative
	\[ \begin{aligned}
		& \frac{\mathrm{d}^2}{\mathrm{d}t^2} \mathrm{KL}(P_{y|x}(\theta) || P_{y|x}(R_{\theta}(td))) \mid_{t=0} \\
		= & \E_{P_{y|x}(\theta)}[ d^\top \grad \log p(y|f(x, \theta)) \left(\grad \log p(y| f(x, \theta)) \right)^\top d ].
	\end{aligned}
	\]
	 It follows that $\grad  \E_{P_{y|x}(\theta)} \log p(y|f(x, \theta)) = 0$.
	By the definition of $F^R$, we conclude that
	$$ \E_{P_x}\mathrm{KL}(P_{y|x}(\theta) || P_{y|x}(R_{\theta}(d)) = \frac{1}{2}d^\top F^R(\theta) d + O(d^3), \quad \forall d \in T_{\theta} \Mcal.$$
	
	From the fact \cite[Proposition 1]{ollivier2017information} that
	\[  \frac{-A^{-1} \nabla h(\theta)}{\|\nabla h(\theta)\|_{A^{-1}}} = \lim_{\epsilon \rightarrow 0} \frac{1}{\epsilon} \argmin_{d :  \|d\|_A \leq \epsilon} h(\theta + d), \] 
	where $A$ is a positive definite matrix and $\|d\|_{A^{-1}} = \sqrt{d^\top A^{-1}d}$, we have 
	\be \label{eq:2nd-geo1}  \frac{-B^{-1} \nabla (\Phi \circ R_{\theta}) (0)}{\|\nabla (\Phi \circ R_{\theta}) (0)\|_{B^{-1}}} = \lim_{\epsilon \rightarrow 0} \frac{1}{\epsilon} \argmin_{d \; \in T_{\theta} \Mcal : \|d\|_A \leq \epsilon} \Phi(R_{\theta}(d)), \ee 
	where $B:T_{\theta} \Mcal \rightarrow T_{\theta} \Mcal $ is a positive definite linear operator.
	Note that for all $u \in T_{\theta} \Mcal$, it holds that
	\[ \nabla (\Phi \circ R_{\theta}) (0) [u] = \nabla \Phi(R_{\theta}(0)) [\mathrm{D}R_\theta(0)[u]] = u^\top \grad \Phi(\theta). \]
	This gives
	\[  \nabla (\Phi \circ R_{\theta}) (0)  = \grad \Phi(\theta). \]
	Substituting the above into \eqref{eq:2nd-geo1} and letting $B= F^R(\theta)$, we have
	\be \label{eq:2nd-geo2}  \frac{-(F^R(\theta))^{-1}  \grad \Phi(\theta) }{\| \grad \Phi (\theta) \|_{(F^R(\theta))^{-1}}} = \lim_{\epsilon \rightarrow 0} \frac{1}{\epsilon} \argmin_{d \in T_{\theta} \Mcal : \|d\|_{F^R(\theta)} \leq \epsilon} \Phi(R_{\theta}(d)). \ee 
	Therefore, we conclude that
	$$
	\frac{-(F^R(\theta))^{-1} \grad \Phi(\theta) }{\| \grad \Phi(\theta) \|_{(F^R(\theta))^{-1}}}=\lim _{\epsilon \rightarrow 0} \frac{1}{\epsilon} \underset{d \in T_{\theta} \Mcal:  \E_{P_x} \left[ \operatorname{KL}\left(P_{y|x}(\theta) || P_{y|x}(R_{\theta}(d))\right) \right] \leq \epsilon^{2}/2}{\arg \min } \Phi(R_{\theta}(d))
	$$
	for any second-order retraction $R$.
\end{proof}

Note that the Riemannian Hessian \cite[Equation 7]{absil2013extrinsic} of $\Phi$ at $\theta$ along $u \in T_{\theta}\Mcal$ is given by
\[ \hess \Phi(\theta) [u] = \mathcal{P}_{T_\theta \Mcal} \left(\nabla^2 \Phi(\theta) [u] \right) - \mathcal{P}_{T_\theta\Mcal} \mathrm{D}_u (\grad \Phi(\theta)).  \] Hence, we have $\hess \Phi(\theta) = F^R(\theta)$ due to the fact that $ \grad \Phi(\theta) = 0$. Similar to EFIM,  we can define REFIM as
\be \label{eq:refim} \bar{F}^R(\theta) := \frac{1}{|\mathcal{S}|} \sum_{(x,y) \in \mathcal{S}}  \grad \log p(y|f(x,\theta)) \grad \log p(y|f(x,\theta))^\top. \ee

\subsection{Algorithmic framework}
In the $k$-th iteration, once we obtain an estimate $F_k$ of the RFIM associated with $\Phi$ or the REFIM associated with $\Psi$ at
$\theta^k$, the Riemannian natural gradient direction in the
tangent space to $\mathcal{M}$ at $\theta^k$ is computed by solving the following optimization problem:
\be \label{prob:NG-sub}
\begin{aligned}
	d^{k} = \argmin_{d \in T_{\theta^k}\Mcal} \quad m_k(d):= \Psi_k + \iprod{g^k}{d} + \frac{1}{2}\iprod{(F_k + \lambda_k I) d}{d}
\end{aligned}, \ee
where $\Psi_k$ and $g^k$ are stochastic estimates of $\Psi(\theta^k)$ and $\grad
\Psi(\theta^k)$, respectively and $\lambda_k > 0$ is usually updated adaptively by a
trust region-like strategy. Since $F_k + \lambda_k I : T_{\theta^k}\Mcal \rightarrow T_{\theta^k}\Mcal$ is positive definite and $g^k \in T_{\theta^k}\Mcal$, the solution of \eqref{prob:NG-sub} is 
$ d^k = -(F_k + \lambda_k I)^{-1} g^k.$
If the inverse of $F_k + \lambda_k I$  is costly to compute, then the truncated conjugated
gradient method can be utilized \cite{NocedalWright06}. 

Once $d^k$ is obtained, we construct a trial point 
\be \label{eq:trial} z^k = R_{\theta^k}(d^k). \ee
To measure whether $z^k$ leads to a sufficient decrease in the objective value, we first calculate the ratio $\rho_k$ between the reduction of $\Psi$ and the reduction of $m_k$. Since the exact evaluation of $\Psi$ is costly, one popular way \cite{chen2018stochastic} is to construct estimates $\Psi_k^0$ and $\Psi_k^{z^k}$ of $\Psi(\theta^k)$ and $\Psi(z^k)$, respectively. Then, we compute the ratio as
\be \label{eq:ratio} \rho_k = \frac{\Psi_k^{z^k} - \Psi_k^0}{m_k(d^k) - \Psi_k^0}.  \ee
Here, we take $\Psi_k = \Psi_k^0$ in the calculation of $m_k(d^k)$. Lastly, we perform the update
\be \label{eq:theta-update}
\theta^{k+1}=\left\{\begin{array}{ll}
	z^{k}, & \text { if } \rho_{k} \geq \eta_{1} \text{~and~} \|g^k\| \geq \frac{\eta_2}{\sigma_k}, \\
	\theta^{k}, & \text { otherwise, }
\end{array}\right.
\ee
where $\eta_1 \in (0,1)$ and $\eta_2 > 0$ are constants and $\sigma_k>0$ is used to control the regularization parameter $\lambda_k$. Indeed, to ensure the descent property of the original function
$\Psi$, some assumptions on the accuracy of the estimates of
$\Psi(\theta^k)$, $\Psi(z^k)$ and the model $m_k$ are needed, and they will be introduced  later in the convergence analysis. Due to the error in the estimates, 
the regularization parameter $\lambda_{k+1}$ should not only depend on the ratio $\rho_k$ but also on the norm of the estimated Riemannian gradient $g^k$. In particular, we set $\lambda_{k+1}:= \sigma_{k+1} \|g^{k+1}\|$ and update $\sigma_{k+1}$ as
\be \label{eq:mu-update}
\sigma_{k+1}  = \left\{\begin{array}{ll}
	\max\left\{ \sigma_{\min}, \frac{1}{\gamma} \sigma_{k} \right\}, & \text { if } \rho_k \geq \eta_1 \text{~and~} \|g^k\| > \frac{\eta_2}{\sigma_k}, \\
	\gamma \sigma_{k}, & \text { otherwise},\\
\end{array}\right.
\ee
where $\eta_1 \in (0,1)$, $\eta_2 > 0$ are as before and $\sigma_{\min} > 0$, $\gamma > 1$ are parameters. 
Our proposed RNGD method is summarized in Algorithm \ref{alg:RNGD}.
\begin{algorithm2e}[t] 
    \caption{Riemannian natural gradient descent (RNGD) for solving \eqref{prob}.}
	\label{alg:RNGD}
	Choose an initial point $\theta^0$ and parameters $\sigma_0 > 0$, $\sigma_{\min} > 0$, $\lambda_0 = \sigma_0 \|g^0\|$, $\eta_{1} \in (0,1)$, $\eta_2 > 0$, and $\gamma > 1 $. Set $k = 0$.
	
	\While{stopping conditions not met}
	{
		
		Compute the estimated Riemannian gradient $g^k$  and the estimated Riemannian Fisher information matrix $F_k$. \\
		Compute the negative natural gradient direction $d^k$ by solving \eqref{prob:NG-sub} and compute the trial point $z^k$ by \eqref{eq:trial}. \\
		Update $\theta^{k+1}$ based on \eqref{eq:theta-update}.\\
		Update $\lambda_{k+1}$ based on \eqref{eq:mu-update}.\\
		$k\gets k+1$.
	}
\end{algorithm2e}

\section{Practical Riemannian natural gradient descent methods} \label{sec:practicalRNGD}
From the definition of RFIM and REFIM in Section \ref{sec:RNGDM}, the computational cost of solving subproblem \eqref{prob:NG-sub} may be high because of the vectorization of $\Theta$. Fortunately, analogous to \cite{martens2015optimizing}, the Riemannian natural gradient direction can be computed with a relatively low cost if the gradient of a single sample is of low rank, i.e., for a pair of observations $(x,y) \in \mathcal{S}$ and $\psi(\Theta; x,y):= -\log p(y| f(x,\Theta))$, $\nabla\psi$ takes the form
\be \label{eq:kron-grad} \nabla \psi(\Theta; x,y) = G(x,y)A(x,y)^\top, \ee
where $G(x,y) \in \R^{m\times q}$ and $A(x,y) \in \R^{n \times q}$ with $q \ll \min(m,n)$. Let us now elaborate on this observation. 

Recall that the Riemannian gradient of $\psi$ is given by
\[ \grad \psi(\Theta; x,y) = \proj_{T_{\Theta} \Mcal} (\nabla \psi(\Theta; x,y)). \]
When $\nabla\psi$ has the form \eqref{eq:kron-grad}, the linearity of the projection operator implies that
\be \label{eq:efim-kron} \begin{aligned}
	F^R(\theta) & = \E_{P_{x,y}(\theta)} \left[\grad \psi(\theta; x,y) \grad \psi(\theta; x,y)^\top\right] \\
	& \approx  \mathcal{P} \left(\E_{P_{x,y}(\theta)} \left[ A(x,y) A(x,y)^\top \right] \otimes \E_{P_{x,y}(\theta)} \left[ G(x,y) G(x,y)^\top \right]\right) \mathcal{P},
\end{aligned}  \ee
where $P_{x,y}(\theta)$ is the joint distribution of $(x,y)$ given $\theta$, $\mathcal{P} \in \R^{r \times r}$ is the matrix representation of $\proj_{T_{\Theta}\Mcal}$ (note that $\mathcal{P}^\top = \mathcal{P}$ due to the symmetry of orthogonal projection operators), and the approximation is due to the assumption that $A(x,y)$ and $G(x,y)$ are approximately independent; see also \cite[Theorem 1]{grosse2016kronecker} for a use of such an assumption to derive a simplified form of the FIM. By replacing $P_{x,y}(\theta)$ with its empirical distribution observed from $\mathcal{S}$,  an approximate REFIM is given by
\be \label{eq:refim-kron} \bar{F}^R(\theta) \approx \mathcal{P} \left(\left[\frac{1}{|\mathcal{S}|} \sum_{(x,y)\in \mathcal{S}}  A(x,y) A(x,y)^\top \right] \otimes  \left[ \frac{1}{|\mathcal{S}|} \sum_{(x,y)\in \mathcal{S}}  G(x,y) G(x,y)^\top \right] \right) \mathcal{P}. \ee
 When a direct inverse of
 $\bar{F}^R(\theta)$ is expensive to compute, the truncated
 conjugated gradient method can be used. 
 In preparation for the applications, we now show how to construct computationally efficient approximations of the RFIM and REFIM on the Grassmann manifold.
\subsection{RFIM and REFIM on Grassmann manifold}
If the matrix representation $\mathcal{P}$ of the projection operator $\proj_{T_{\Theta}\Mcal}$ has dimensions $m$-by-$m$ or $n$-by-$n$, i.e.,
\[ \grad \psi(\Theta; x,y) = B_1 G(x,y) A(x,y)^\top \;\; \mathrm{or} \;\; \grad \psi(\Theta; x,y) = G(x,y) A(x,y)^\top B_2 \]
with $B_1 \in \R^{m \times m}$ and $B_2 \in \R^{n \times n}$, then we can approximate the RFIM in \eqref{eq:efim-kron} by
\[ F^R(\theta) \approx \E_{P_{x,y}(\theta)} \left[  A(x,y) A(x,y)^\top \right] \otimes \E_{P_{x,y}(\theta)} \left[ B_1G(x,y) G(x,y)^\top B_1 \right] \]
or 
\[ F^R(\theta) \approx \E_{P_{x,y}(\theta)} \left[ B_2A(x,y) A(x,y)^\top B_2 \right] \otimes \E_{P_{x,y}(\theta)} \left[ G(x,y) G(x,y)^\top \right]. \]
Moreover, if we replace $P_{x,y}(\theta)$ by its empirical distribution observed from $\mathcal{S}$, then we can approximate the REFIM in \eqref{eq:refim-kron} by
\[
	\bar{F}^R(\theta) \approx \left(\frac{1}{|\mathcal{S}|} \sum_{(x,y)\in \mathcal{S}} A(x,y) A(x,y)^\top \right) \otimes \left(\frac{1}{|\mathcal{S}|} \sum_{(x,y)\in \mathcal{S}} B_1G(x,y) G(x,y)^\top B_1 \right) \] 
	or
	\[ 
	\bar{F}^R(\theta)  \approx \left(\frac{1}{|\mathcal{S}|} \sum_{(x,y)\in \mathcal{S}} B_2A(x,y) A(x,y)^\top B_2\right) \otimes \left(\frac{1}{|\mathcal{S}|} \sum_{(x,y)\in \mathcal{S}} G(x,y) G(x,y)^\top \right).
\]
Note that the Kronecker product form allows the inverse of $\bar{F}^R(\theta)$ to be calculated efficiently by inverting
two smaller  matrices \cite{martens2015optimizing}.
A typical manifold that yields the above Kronecker product representations is the Grassmann manifold $\mathrm{Gr}(m,n)$, which consists of all $n\;(\mathrm{resp.}, \, m)$ dimensional subspaces in $\R^m \; (\mathrm{resp.}, \, \R^n)$ if $m \geq n$ (resp., $m < n$). The matrix representation of the projection operator is $B_1 = I_{m} -\Theta \Theta^\top$ ($m\geq n$) or $B_2 = I_{n} - \Theta^\top \Theta$ ($m < n$). In what follows, we derive the RFIMs associated with three concrete applications involving the Grassmann manifold and explain how they can be computed efficiently.

\subsection{Applications}
\subsubsection{Low-rank matrix completion}

For simplicity, we derive the RFIM associated with problem \eqref{prob:lrmc-convert} for the fully observed case, i.e., $\Omega = \{1,\ldots, n\}\times \{1,\ldots, N\}$. One can derive the RFIM for the partly observed case in a similar fashion. By definition, we have $f(x, U) = U a(U; x) - x$ and $\psi(U; x,y) = -\log p(y|f(x, U)) =\frac{1}{2}\| f(x, U) - y \|^2 + \frac{n\log(2\pi)}{2}$. It follows from \cite{boumal2015low} that the Jacobian of $a$ along a tangent vector $H \in T_U\mathrm{Gr}(n,p)$ is given by $J_a(U;x)[H] = H^\top x$ and its adjoint $J^\top_a(U;x)$ satisfies $J^\top_a(U;x)[v] = x^\top v$ for $v \in \R^p$.  
The Riemannian gradient of $\psi(\cdot;x,y)$ is 
\[ \begin{aligned}
	\grad \psi(U; x,y) = & (I-UU^\top)((Ua(U;x)-x - y)a(U; x)^\top) \\
	& + (I-UU^\top) x(Ua(U;x) - x- y)^\top U.
\end{aligned} \]
By assuming that the residual $Ua(U;x) - x$ is close to zero, we have$(I - UU^\top)x \approx (I - UU^\top)Ua(U;x) = 0$. This leads to the following approximate Riemannian gradient of $\psi(\cdot; x, y)$: 
\be \label{eq:approx-grad-lrmc} \grad \psi(U; x,y) \approx (I-UU^\top)((Ua(U;x)-x - y)a(U; x)^\top). \ee
Plugging the above approximation into \eqref{eq:efim-kron} leads to 
\[ \begin{aligned}
	F^R(u) = & \E_{P_x} \left[ \E_{P_{y|x}(u)}\left[ \grad \psi(u; x, y) \grad \psi(u;x, y)^\top \right] \right] \\
	 \approx & \E_{P_x} \left[ \E_{P_{y|x}(U)} \left[ [a(U;x)a(U;x)^\top] \otimes \left[(I-UU^\top) (Ua(U;x) - x - y) \right. \right. \right. \\
	 & \left. \left. \left. (Ua(U;x) - x - y)^\top (I-UU^\top) \right]  \right] \right] \\
 \approx & \left[\frac{1}{N} \sum_{i=1}^N a(U;x_i)a(U;x_i)^\top \right] \otimes (I - UU^\top), 
\end{aligned} \]
where $u = \vect(U)$ is the vectorization of $U$, the second line is due to \eqref{eq:approx-grad-lrmc}, $\vect(u v^\top) = v\otimes u$, $(A\otimes B)^\top = A^\top \otimes B^\top$, and $(A\otimes B)(A^\top \otimes B^\top) = (AA^\top) \otimes (BB^\top)$, and the last line follows from $\E_{P_{y|x}(U)}\left[(Ua(U;x) - x - y) (Ua(U;x) - x - y)^\top \right] = I$ and by substituting $P_x$ with its empirical distribution. For $H \in T_U\mathrm{Gr}(n,p)$, we have 
\be \label{eq:rfim-lrmc} \begin{aligned}
	\mathrm{mat}(F^R(u)[\vect(H)]) & \approx \left[\frac{1}{N} \sum_{i=1}^N a(U;x_i)a(U;x_i)^\top \right] \otimes (I - UU^\top) \vect(H) \\
	& = H \left[\frac{1}{N} \sum_{i=1}^N a(U; x_i)a(U; x_i)^\top \right],
\end{aligned} \ee
where $\mathrm{mat}(b)$ converts the vector $b \in \R^{np}$ into an $n$-by-$p$ matrix and the equality follows from $(I - UU^\top)H = H$.
For the partly observed case, 
the matrix $F^R(u)$ defined in the above equation can serve as a good approximation of
the exact RFIM. Note that $\frac{1}{N} \sum_{i=1}^N a(U; x_i)a(U; x_i)^\top
\in \R^{p\times p}$ is of low dimension since the rank $p$ is usually
small. Thus, the Riemannian natural gradient direction can be calculated with a relatively low cost. 
\subsubsection{Low-dimension subspace learning} 
In multi-task learning \cite{ando2005framework,mishra2019riemannian},
different tasks are assumed to share the same latent low-dimensional feature
representation. Specifically, suppose that the $i$-th task  has the 
training set $X_i\in \R^{d_i\times n}$ and the corresponding label set $y_i\in
\R^{d_i}$ for $i=1,\ldots,N$. The multi-task feature learning problem can then be formulated as
\begin{equation}
	\label{prob:subspace-learning}
	\min_{U \in \text{Gr}(n,p)} \Psi(U) = \frac{1}{2N}\sum_{i=1}^N \|X_i U w(U; X_i, y_i) - y_i\|^2,
\end{equation}
where $w(U; X_i, y_i) = \arg\min_{w}  \frac{1}{2} \|X_i U w - y_i\|^2 + \lambda \|w\|^2$ and $\lambda > 0$ is a regularization parameter. Suppose that $d_1 = \cdots = d_N = d$. Then, problem \eqref{prob:subspace-learning} has the form \eqref{prob}, where $\mathcal{S} = \{ ((X_i, y_i), 0) \}_{i=1}^N$, $\mathcal{X} = \R^{d \times (n+1)}, \; \mathcal{Y} = \R^d$,  $f(X, y, U) = X U w(U; X,y) - y $, and $p(z|f(X, y, U)) =\frac{1}{\sqrt{(2\pi)^{d}}}\exp(-\frac{1}{2}(z-f(X, y, U))^\top (z-f(X, y, U)))$. By ignoring the constant $\frac{d\log(2\pi)}{2}$ and slightly abusing the notation, we define $\psi(U; X, y, z) =  \frac{1}{2}\|X U w(U; X, y) - y - z\|^2$. Using the optimality of $w(U; X,y)$, we have $U^\top X^\top (X U \\ w(U; X,y) - y) + \lambda w(U; X, y) =0$. 
Then, we can compute the Euclidean gradient of $\psi(\cdot;X,y,z)$ as
\[ \begin{aligned}
	& \nabla \psi(U; X, y, z) \\
	 = & X^\top(X U w(U; X, y) - y -z) w(U; X,y)^\top + J_w^\top(U) \left[ U^\top X^\top(X U w(U; X, y) - y - z) \right] \\
	\approx & X^\top(X U w(U;X,y) - y) w(U; X,y)^\top,
\end{aligned}
 \]
 where $J_w(U)$ is the Jacobian of $w(U;X,y)$, $J_w^\top(U)$ denotes the adjoint of $J_w(U)$, and the approximation holds for small $\lambda$ and $\|z\|$. Note that $z$ will lie in a small neighborhood of zero with high probability if $f(X,y, U)$ is close to 0. Besides, $z$ is always zero in the dataset $\mathcal{S}$.
 With the above, an approximate Riemannian gradient of $\psi(\cdot; X,y,z)$ is given by 
 \be \label{eq:app-grad-ldsl} \grad \psi(U; X, y, z) \approx (I-UU^\top) X^\top(X U w(U; X,y) - y - z) w(U; X,y)^\top. \ee 
Consequently, we have
\begin{equation} \label{eq:fim-subspace-learning}
	\begin{aligned} 
		F^R(u) & = \E_{P_{(X,y)}} \left[ \E_{P_{z|(X,y)}(u)}[ \grad \psi(u; X, y, z) \grad \psi(u; X, y, z)^\top] \right]\\
		\approx & \frac{1}{N}\sum_{i=1}^N ( w_i \otimes ((I-UU^\top) X_i^\top))( w_i \otimes ((I-UU^\top) X_i^\top))^\top      \\
		=& \frac{1}{N}\sum_{i=1}^N \left[ ( w_i w_i^\top) \otimes ((I-UU^\top)X_i^\top X_i (I-UU^\top))\right] \\
		\approx & \frac{1}{N}\left[  \sum_{i=1}^N  w_i w_i^\top \right] \otimes \left[ \frac{1}{N} \sum_{i=1}^N (I-UU^\top)X_i^\top X_i (I-UU^\top)\right],
	\end{aligned}
\end{equation}
where $u = \vect(U)$ is the vectorization of $U$, $w_i:= w(U;X_i,y_i)$, the second line follows from \eqref{eq:app-grad-ldsl}, $\E_{P_{z|(X,y)}(u)}[ (X U w(U; X,y) - y - z) (X U w(U; X,y) - y - z)^\top ] = I$, and the empirical approximation of $P_{(X,y)}$, and the last line holds under the same condition as in \eqref{eq:efim-kron}. Though the construction of $F^R(u)$ is for the case $d_1 = \cdots = d_N$, it can be easily extended to the case where the $d_i$'s are not equal. 
\subsubsection{Fully connected network with batch normalization}\label{sec:3.2}
Consider an $L$-layer neural network with input $a_{0} = x$. In the $l$-th layer, we have
\be \label{eq:fcn}
s_{l}=W_{l} a_{l-1} + b_l, \; t_{l,i} = \frac{s_{l,i}-\mathbb{E}(s_{l,i})}{\operatorname{Var}(s_{l,i})} \times \gamma_{l,i} + \beta_{l,i},\; i=1,\ldots,n_l, \; a_{l}=\varphi_{l}\left(t_l\right), 
\ee
where $\varphi_{l}$ is an element-wise activation function, $W_{l} \in \R^{n_{l}\times n_{l-1}}$ is the weight, $b_l \in \R^{n_l}$ is the bias, $s_{l,i}$ is the $i$-th component of $s_l \in \R^{n_l}$, $\gamma_{l,i}, \beta_{l,i} \in \R$ are two learnable parameters, $\mathrm{Var}(s_{l,i})$ is the variance of $s_{l,i}$, and $f(x, \Theta)=a_{L} \in \R^{m}$ is the output of the network with $\Theta$ being the collection of parameters $\{ W_l, b_l, \gamma_l, \beta_l \}$. By default, the elements of $\gamma_{l,i}$ are set to 1 and the elements of $\beta_{l,i}$ are set to 0. In \cite{ioffe2015batch}, $t_{l,i}$ is called the batch normalization of $s_{l,i}$.

Given a dataset $\mathcal{S}$, our goal is to minimize the discrepancy between the network output $f(x, \Theta)$ and the observed output $y$, namely,
\be \min_{\Theta} \;\; \Psi(\Theta) = -\frac{1}{|\mathcal{S}|} \sum_{(x,y) \in \mathcal{S}}\log p(y|f(x, \Theta)). \ee
By \cite{cho2017riemannian}, each row of $W_l$ lies on the Grassmann manifold $\mathrm{Gr}(1,n_{l-1})$. It follows that $W_l$ lies on the product of Grassmann manifolds, i.e., 
$ W_l \in \mathrm{Gr}(1,n_{l-1}) \times \cdots \times \mathrm{Gr}(1,n_{l-1}) \in \R^{n_l \times n_{l-1}}$.
The remaining parameters lie in the Euclidean space. Rather than batch
normalization, layer normalization \cite{ba2016layer}
and weight normalization \cite{salimans2016weight} have also been widely investigated in the study of deep neural networks, where $\vect(W_l) \in \mathrm{Gr}(n_l \times n_{l-1}, 1)$ and $W_l \in \mathrm{Sp}(n_{l-1} -1 ) \times \cdots \times \mathrm{Sp}(n_{l-1}-1) \in \R^{n_l \times n_{l-1}}$ with $\mathrm{Sp}(n_{l-1} -1):=\{u \in \R^{n_{l-1}} : \|u\| = 1 \}$, respectively.

By back-propagation, the Euclidean gradient of $\Psi$ with respect to $W_l$ is given by 
$$
g_{l} \leftarrow \mathrm{D} a_{l} \odot \varphi_{l}^{\prime}\left(t_l\right) \odot \mathrm{D} t_l,  \quad \nabla \Psi(W_{l}) \leftarrow g_{l} a_{l-1}^{\top}, \quad \mathrm{D} a_{l-1} \leftarrow W_{l}^{\top} g_{l}.
$$
In particular, we see that $\nabla \Psi(W_{l})$ has the Kronecker product form \eqref{eq:kron-grad}.
Moreover, note that $\Psi(w_{l,i}) = \Psi(cw_{l,i}), \;  \forall c\ne 0$. Now, we compute
\[ \nabla \Psi(w_{l,i})w_{l,i}^\top = \lim_{t \rightarrow 0} \frac{\Psi(w_{l,i} + t w_{l,i}) - \Psi(w_{l,i})}{t} = 0. \] 
By definition of the projection operator defined on the product of Grassmann manifolds, the Riemannian gradient $\mbox{grad}\,\Psi(W_l)$ is actually the same as the Euclidean gradient $\nabla \Psi(W_l)$. Specifically, for the $i$-th row of $\grad \Psi(W_l)$, we have
\[ \left[\grad \Psi(W_l) \right]_i = \grad \Psi(w_{l,i}) = \nabla \Psi(w_{l,i})
- \nabla \Psi(w_{l,i}) w_{l,i}^\top w_{l,i}  = \nabla \Psi(w_{l,i}). \]
Therefore, the RFIM coincides
with the FIM. The inverse of $F^R(\theta)$ can be computed easily when the FIM has a Kronecker product form.

\section{Convergence Analysis} \label{sec:convergence-analysis}
In this section, we study the convergence behavior of the RNGD method (Algorithm \ref{alg:RNGD}). 
\subsection{Global convergence to a stationary point}
To begin, let us extend some of the definitions used in the study of Euclidean stochastic trust-region methods (see, e.g., \cite{chen2018stochastic}) to the manifold setting. 
\begin{definition} \label{def:full-linear}
	Let $\kappa_{\rm ef}, \kappa_{\rm eg} > 0$ be given constants. A function $m_k$ is called a $(\kappa_{\rm ef}, \kappa_{\rm eg})$-fully linear model of $\Psi$ on $B_{\theta^k}(0, 1/\sigma_k)$ if for any $y \in B_{\theta^k}(0, 1/\sigma_k)$,
	\be \label{eq:full-linear} 
	\| \nabla (\Psi \circ R_{\theta^k})(y) - \nabla m_k(y) \| \leq \frac{\kappa_{\rm eg}}{\sigma_k} \quad \textrm{and}\quad |\Psi \circ R_{\theta^k}(y) - m_k(y)| \leq \frac{\kappa_{\rm ef}}{\sigma_k^2},  \ee
	where $B_{\theta}(0, \rho) := \left\{d \in T_{\theta}\Mcal : \|d\| \leq \rho \right\}$. 
\end{definition}

\begin{definition} \label{def:acc-est}
	Let $\epsilon_F, \sigma_k>0$ be given constants. The quantities $\Psi_{k}^{0}$ and $\Psi_{k}^{z^k}$ are called $\epsilon_{F}$-accurate estimates of $\Psi\left(\theta^{k}\right)$ and $\Psi_k\left(z^k\right)$, respectively if
	\be \label{eq:acc-est}
	\left|\Psi_{k}^{0}-\Psi\left(\theta^{k}\right)\right| \leq \frac{\epsilon_{F}}{\sigma_{k}^{2}} \quad \text { and } \quad\left|\Psi_{k}^{z^k}-\Psi_k\left(z^k\right)\right| \leq \frac{\epsilon_{F}}{\sigma_{k}^{2}},
	\ee
	where $z^k$ is defined in \eqref{eq:trial}.
\end{definition}

Analogous to \cite{chen2018stochastic,wang2019stochastic}, the inequalities \eqref{eq:full-linear} and \eqref{eq:acc-est} can be guaranteed when $\mathcal{M}$ is compact, the number of samples is large enough, and $\nabla (\Psi\circ R)$ is Lipschitz continuous. 

Next, we introduce the assumptions needed for our convergence analysis. Their Euclidean counterparts can be found in, e.g., \cite[Assumptions 4.1 and 4.3]{chen2018stochastic}.

\begin{assumption} \label{assum:f}
	
	Let $\theta^0 \in \mathbb{R}^r, \sigma_{\min} > 0$ be given. Let $\mathcal{L}(\theta^0)$ denote the set of iterates generated by Algorithm \ref{alg:RNGD}. Then, the function $\Psi$ is bounded from below on $\mathcal{L}(\theta^0)$. Moreover, the function $\Psi \circ R$ and its gradient $\nabla (\Psi\circ R)$ are $L$-Lipschitz continuous on the set
	\[ \mathcal{L}_{\rm enl}(\theta^0) = \bigcup_{\theta \in \mathcal{L}(\theta^0)} B_{\theta}\left(0,\frac{1}{\sigma_{\min}}\right). \]
\end{assumption}
\begin{assumption} \label{assum:fisher}
	The RFIM or REFIM $F_k$ satisfies $\|F_k\|_{\rm op} \leq \kappa_{\rm fim}$ for all $k \ge 0$, where $\| \cdot \|_{\rm op}$ is the operator norm.
\end{assumption}

With the above assumptions, we can prove the convergence of Algorithm \ref{alg:RNGD} by adapting the arguments in \cite{chen2018stochastic}. The main difference is that our analysis makes use of the pull-back function $\Psi\circ R$ and its Euclidean gradient; see  Definitions \ref{def:full-linear} and $\ref{def:acc-est}$. 
\begin{theorem} \label{thm:lim-con}
	Suppose that Assumptions \ref{assum:f} and \ref{assum:fisher} hold, $m_k$ is
    a $(\kappa_{\rm ef}, \kappa_{\rm eg})$-fully linear model for some $\kappa_{\rm ef}, \kappa_{\rm eg} > 0$, and the estimates $\Psi_k^0$ and
    $\Psi_k^{z^k}$ are $\epsilon_{F}$-accurate for some $\epsilon_F>0$. Furthermore, suppose that  $\eta_2 \geq \max\left\{\kappa_{\rm fim}, \frac{16 \kappa_{\rm ef}}{1-\eta_1}\right\}$ and $ \epsilon_{F} \leq \min \left\{ \kappa_{\rm ef}, \frac{1}{32} \eta_1 \eta_2 \right\}$.  
	Then, the sequence of iterates $\{ \theta^k \}$ generated by Algorithm \ref{alg:RNGD} will almost surely satisfy
	$$
	\liminf_{k \rightarrow \infty}\left\|\grad \Psi(\theta^{k})\right\|=0.
	$$
\end{theorem}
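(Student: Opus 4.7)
The plan is to adapt the stochastic trust-region convergence analysis of Chen-Scheinberg-Vicente (2018) to the Riemannian setting, with the key novelty being that Taylor-type expansions and the fully-linear conditions are done on the tangent space through the pull-back $\Psi\circ R$, rather than directly on the manifold. I would start by recording the identity $\nabla(\Psi\circ R_{\theta^k})(0)=\grad \Psi(\theta^k)$, so that all model/gradient comparisons are well-defined in the Euclidean tangent vector space $T_{\theta^k}\Mcal$. This identification is what lets us reuse Euclidean-style estimates despite the curved ambient geometry.

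The first step is the model-decrease bound. Since $d^k=-(F_k+\lambda_k I)^{-1}g^k$, the quadratic model satisfies $m_k(0)-m_k(d^k)\geq \tfrac{1}{2}\|g^k\|^2/(\kappa_{\rm fim}+\lambda_k)$, using $\|F_k\|_{\rm op}\leq\kappa_{\rm fim}$. With $\lambda_k=\sigma_k\|g^k\|$ and $\|g^k\|\geq\eta_2/\sigma_k$ (the condition triggering acceptance), this yields a decrease of order $\|g^k\|/\sigma_k$ up to constants depending on $\eta_2$ and $\kappa_{\rm fim}$. I would then combine this with the fully-linear bound $|\Psi\circ R_{\theta^k}(d^k)-m_k(d^k)|\leq \kappa_{\rm ef}/\sigma_k^2$ and the $\epsilon_F$-accuracy of $\Psi_k^0,\Psi_k^{z^k}$ to show two things simultaneously: (i) the acceptance ratio $\rho_k$ in \eqref{eq:ratio} exceeds $\eta_1$ whenever $\|g^k\|\geq\eta_2/\sigma_k$, so such steps are always successful; (ii) the true function value actually drops by an amount comparable to $\|g^k\|/\sigma_k$, i.e.\ $\Psi(\theta^{k+1})\leq\Psi(\theta^k)-c\,\|g^k\|/\sigma_k$ for a constant $c>0$ determined by $\eta_1,\eta_2,\kappa_{\rm ef},\kappa_{\rm fim}$. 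The parameter conditions $\eta_2\geq 16\kappa_{\rm ef}/(1-\eta_1)$ and $\epsilon_F\leq\tfrac{1}{32}\eta_1\eta_2$ are precisely what make these two deductions compatible.

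Next I would introduce the merit/Lyapunov function $\Phi_k=\nu\,\Psi(\theta^k)+(1-\nu)/\sigma_k^2$ for a suitably small $\nu\in(0,1)$, in the spirit of the proof of Theorem 4.11 in \cite{chen2018stochastic}. A case analysis on whether the step is successful or not, combined with the update rule \eqref{eq:mu-update} for $\sigma_{k+1}$, gives a uniform one-step decrease $\E[\Phi_{k+1}-\Phi_k\,|\,\mathcal{F}_{k-1}]\leq -\kappa\,\min\{\|\grad\Psi(\theta^k)\|^2/\sigma_k^2,\,1/\sigma_k^2\}$ for some $\kappa>0$: in an unsuccessful iterate (which, by (i) above, forces $\|g^k\|<\eta_2/\sigma_k$ and, via the fully-linear bound with $\eta_2\geq\kappa_{\rm fim}$, also $\|\grad\Psi(\theta^k)\|$ small relative to $1/\sigma_k$) the decrease comes from $\sigma_k$ growing by $\gamma>1$; in a successful iterate the decrease comes from part (ii). Summing over $k$ and using that $\Psi$ is bounded from below (Assumption \ref{assum:f}) yields $\sum_k \min\{\|\grad\Psi(\theta^k)\|^2/\sigma_k^2,\,1/\sigma_k^2\}<\infty$ almost surely.

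Finally, I would argue by contradiction: if $\liminf_k\|\grad\Psi(\theta^k)\|>\delta>0$ on a set of positive probability, then the fully-linear bound forces $\|g^k\|\geq\delta/2$ along that subsequence, iterations are accepted, $\sigma_k$ stays bounded, and the summable series above diverges --- a contradiction. The main obstacle, as I see it, is the bookkeeping in step three: one has to ensure that the Lipschitz constant of $\nabla(\Psi\circ R)$ on the enlarged set $\mathcal{L}_{\rm enl}(\theta^0)$ (Assumption \ref{assum:f}) is absorbed cleanly into the fully-linear constants $\kappa_{\rm ef},\kappa_{\rm eg}$, and that the bound \eqref{eq:bound-retraction1} for compact manifolds is used to keep iterates within a region where these constants are uniform. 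Everything else is a careful translation of the Euclidean proof, with $\Psi\circ R_{\theta^k}$ playing the role of the local linearization of $\Psi$ around $\theta^k$.
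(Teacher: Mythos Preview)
Your proposal is correct and follows essentially the same route as the paper: both adapt the Chen--Menickelly--Scheinberg analysis via the pull-back $\Psi\circ R_{\theta^k}$, introduce the Lyapunov function $\nu\,\Psi(\theta^k)+(1-\nu)/\sigma_k^2$, do a successful/unsuccessful case split, and finish by contradiction. The only cosmetic difference is that the paper records the cleaner uniform bound $\E[\Phi_{k+1}-\Phi_k\mid\mathcal{F}_{k-1}]\le -\tau/\sigma_k^2$ (hence $\sigma_k\to\infty$ directly), whereas you state the decrease with an unnecessary $\min\{\|\grad\Psi(\theta^k)\|^2/\sigma_k^2,\,1/\sigma_k^2\}$; since in a successful step $\|g^k\|\ge\eta_2/\sigma_k$ already forces a decrease of order $1/\sigma_k^2$, your bound can be simplified to match the paper's, but either version closes the argument.
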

\begin{proof}
	One can prove the conclusion by following the arguments in \cite[Theorem
    4.16]{chen2018stochastic}. We here present a sketch of the proof. 
    Define $\mathcal{F}_k$ as the $\sigma$-algebra generated by $\Psi_1^0,\Psi_1^{z^1}, \ldots, \Psi_k^0,\Psi_k^{z^k}$ and $m_1, \ldots, m_k$.
    Consider the random function $\Phi_{k}=v \Psi(\theta^k) +(1- v)
    /\sigma_{k}^{2}$, where $v \in(0,1)$ is fixed.  
    The idea is to prove that there exists a constant $\tau>0$ such that for all $k$,
	\be \label{eq:desc-exp}
	\E\left[\Phi_{k+1}-\Phi_{k} \mid \mathcal{F}_{k-1}\right] \leq -\frac{\tau}{\sigma_k^2} <0.
	\ee 
	Summing \eqref{eq:desc-exp} over $k \ge 1$ and taking expectations on both sides lead to $\sum_{k=1}^\infty 1/\sigma_k^2 < \infty$. 
	The inequality \eqref{eq:desc-exp} can be proved in the following steps. Firstly, a decrease on
    $\Psi$ of order $-\mathcal{O}(1/\sigma_k^2)$ can be proved using the
    fully linear model approximation and the positive definiteness of $F_k +
    \sigma_k \|g^k\| I$ with a sufficiently large $\sigma_k$. Secondly, the trial point $z^k$ will
    be accepted provided that the estimates $\Psi_k^0$ and $\Psi_k^{z^k}$ are
    $\epsilon_{F}$-accurate with sufficiently small $\epsilon_{F}$ and large $\sigma_k$. In addition, with $\eta_2 \ge \max\left\{ \kappa_{\rm fim}, \frac{16 \kappa_{\rm ef}}{1-\eta_1} \right\}$, if $z^k$ is accepted (i.e., $\theta^{k+1} = z^k$), then a decrease of
    $-\mathcal{O}(1/\sigma_k^2)$ on $\Psi$ can always be  guaranteed when
    $\epsilon_{F} \leq \min \left\{ \kappa_{\rm ef}, \frac{1}{32} \eta_1 \eta_2
    \right\}$ based on the update scheme \eqref{eq:mu-update}. On the other hand, if $z^k$ is rejected (i.e., $\theta^{k+1} = \theta^k$), then $\E\left[\Phi_{k+1} - \Phi_k | \mathcal{F}_{k-1}\right] = (1-v) (1/\gamma^2 -1)/\sigma_k^2$. By choosing $v$ to be sufficiently close to 1, the inequality \eqref{eq:desc-exp} holds for any $k$. 
    
    Now, we have $\sigma_k \rightarrow \infty$ as $k \rightarrow \infty$ with probability 1. If there exist $\epsilon > 0$ and $k_0 \ge 1$ such that
    $\|\grad \Psi(\theta^k)\| \geq \epsilon \; \mathrm{for~all}\; k \geq k_0$, then the trial point will be accepted eventually because the estimates $\Psi_k^0$ and
    $\Psi_k^{z^k}$ are $\epsilon_F$-accurate. Recall that $\sigma_{k}$ is decreasing in the case of accepting $z^k$. This means that $\sigma_k$ will be bounded above, which leads to a contradiction. Hence, we conclude that $\liminf_{k \rightarrow \infty} \|\grad \Psi(\theta^k)\| = 0$ will hold almost surely.
\end{proof}
\begin{Remark}
	Analogous to \cite[Theorem 4.18]{chen2018stochastic}, one can show that $\lim\limits_{k\rightarrow \infty} \|\grad \Psi(\theta^k)\| \\ = 0$ will hold almost surely by assuming the Lipschitz continuity of $\grad \Psi$.
\end{Remark}

\subsection{Convergence rate analysis of RNGD} \label{sub:convergence-rate}
In this subsection, we study the local convergence rate of a deterministic version of the RNGD method. To begin, let us write $L(z,y) := -\log p(y|z)$ and suppose that $P_x$ is the empirical distribution defined by $\mathcal{S}$. Then, according to the definition of RFIM in \eqref{eq:RFIM} and the chain rule, we have
\[ F^R(\theta) = \frac{1}{|\mathcal{S}_x|} \sum_{x\in \mathcal{S}_x} J^R(x, \theta)^\top F_L(x, \theta) J^R(x,\theta),   \]
where $\mathcal{S}_x:=\{ x : (x,y) \in \mathcal{S} \}$,  $F_L(x, \theta) := \E_{P_{y|x}(\theta)} [ \nabla_z \log p(y|z) \nabla_z \log p(y|z)^\top ]|_{z = f(x, \theta)}$, and $J^R(x, \theta) := [\grad f_1(x,\theta), \ldots, \grad f_q(x,\theta)]^\top$ is the Riemannian Jacobian of $f(x, \theta) \\ =[f_1(x, \theta), \ldots, f_q(x, \theta)]^\top$ with respect to $\theta$. Throughout this subsection, we make the following assumptions on the loss function $L$. 
\begin{assumption}	\label{assum:L}
	For any $y \in \mathcal{S}_y := \{ y : (x,y) \in \mathcal{S} \}$, the loss function $L(\cdot, y)$ is smooth and $\mu$-strongly convex and has $\kappa_L$-Lipschitz gradient and $\kappa_H$-Lipschitz Hessian, namely, 
	\[ \mu I \preceq \nabla_{zz}^2 L(z,y) \preceq \kappa_L I, \quad \| \nabla_{zz}^2 L(z,y) - \nabla_{zz}^2 L(x,y)  \| \leq \kappa_H \|z - x\|, \quad \forall z, x \in \R^n. \]
	In addition, the following condition holds:
	\be \label{eq:fim-gn} F_L(x,\theta) = \nabla_{zz}^2 L(z,y) |_{z=f(x,\theta)} := H_L(f(x,\theta)). \ee 
\end{assumption} 

We remark that the equality \eqref{eq:fim-gn} holds if $\nabla_{zz}^2 L(z,y)|_{z=f(x,\theta)}$ does not depend on $y$, which is the case for the square loss $L(z,y) = \| z-y \|^2$ and the cross-entropy loss $L(y,z) = -\sum_{j} y_j \log z_j$. We refer the reader to \cite[Section 9.2]{martens2014new} for other loss functions that satisfy \eqref{eq:fim-gn}. We remark that the square loss $L(z,y) = \|z-y\|^2$, which appears in both the LRMC and low-dimension subspace learning problems, satisfies Assumption \ref{assum:L}.

Now, we write $\mathcal{S} = \{ (x_i,y_i) \}_{i=1}^N$ with $N = |\mathcal{S}|$ and $u(\theta) = [f(x_1,\theta), \ldots, f(x_N,\theta)]^\top$. Define $J^R(\theta) := [J^R(x_1, \theta),$ $ \ldots, J^R(x_N, \theta)]$ and  $H_L(u(\theta)) := \mathrm{blkdiag}(H_L(u(\theta)_1), \ldots,$ $ H_L(u(\theta)_N))$. 
Then, we have $F^R(\theta) = J^R(\theta)^\top H_L(u(\theta)) J^R(\theta)$. For simplicity, let $u^k := u(\theta^k)$.
Note that $F^R(\theta)$ may be singular when $J^R(\theta)$ is not of full
column rank. In this case, provided that $\left( J^R(\theta^k) J^R(\theta^k)^\top \right)^{-1}$ exists, we can use the pseudo-inverse 
 \[ F^R(\theta^k)^\dagger =
J^R(\theta^k)^\top (J^R(\theta^k)J^R(\theta^k)^\top)^{-1}H_L(u^k)^{-1}
(J^R(\theta^k)J^R(\theta^k)^\top)^{-1}J^R(\theta^k) \] for computation. As mentioned at the beginning of this subsection, we focus on a deterministic version of the RNGD method, in which we adopt a fixed step size $t > 0$ and perform the update 
\be \label{eq:rngd1}
d^k = (F^R(\theta^k))^{\dagger} J^R(x, \theta^k)^\top \nabla L(u^k, y), \quad \theta^{k+1} = R_{\theta^k}(-t d^k)).
\ee
For concreteness, let us take $R$ to be the exponential map for $\mathcal{M}$ in our subsequent development. Our convergence rate analysis of this deterministic RNGD method can be divided into two steps. The first step is to prove that the iterates $\{\theta^k\}$ always stay in a neighborhood of $\theta^0$ if $J^R$ satisfies certain stability condition. The second step is to establish the convergence rate of the method by utilizing the strong convexity of $L$.  Motivated by \cite{zhang2019fast}, we now formulate the aforementioned stability condition on $J^R$. 
\begin{assumption}\label{con:stableJ}
	For any $\theta$ satisfying $\|\theta-\theta^0\|\le 4 \kappa_L(\mu\sigma_0)^{-1} \|u^0-y\|$, where $\sigma_0 := \sqrt{\lambda_{\min}(J^R(\theta^0) J^R(\theta^0)^\top)} > 0$, it holds that
	\begin{equation} \label{eq:con-stableJ}
		\|J^R(\theta)-J^R(\theta^0)\| \le \min\left\{\frac{1}{2},\frac{\mu}{6\kappa_L }\right\}\sigma_0.
	\end{equation}
\end{assumption}

As will be seen in Section \ref{subsec:jacobian-2layer}, Assumption \ref{con:stableJ} is satisfied by the Riemannian Jacobian that arises in a two-layer fully connected neural network with batch normalization and sufficiently large width. We are now ready to prove the following theorem.
\begin{theorem}\label{thm:4.2.1}
	Let $R$ be the exponential map for $\mathcal{M}$. Suppose that Assumptions \ref{assum:L} and \ref{con:stableJ} hold. Let $\{\theta^k\}$ be the iterates generated by \eqref{eq:rngd1}. 
	\begin{itemize}
		\item[{\rm{(a)}}] There exists a constant $\kappa_R > 0$ such that if $\|u^0 - y\| < \frac{\mu}{3\kappa_H}$ and $t \leq \min \left\{ 1,\left(\frac{1}{6|\|u^0 - y\|} - \frac{\kappa_H}{2\mu} \right)\cdot \frac{3 \mu^2 \sigma_0} {8 \kappa_R\kappa_L^2}\right\} $, then
		\begin{equation} \label{eq:linear-uk}
			\| u^{k+1} - y \| \le \left(1-\frac{t}{2} \right) \|u^k-y\|.
		\end{equation}
		
		\item[{\rm(b)}] Suppose further that $J^R$ is $\kappa_J$-Lipschitz continuous with respect to $\theta$, i.e.,
		\be \label{eq:Lipschitz-Jacobian} \| J^R(\theta) - J^R(\nu) \| \leq \kappa_J \| \theta - \nu\|, \quad \forall \theta, \nu \in \mathbb{R}^r. \ee
		The rate of convergence is quadratic when $t=1$, namely, there is a constant $\kappa_q > 0$ such that 
		\begin{equation}
			\| u^{k+1}-y\| \le \kappa_q \| u^k-y\|^2.
		\end{equation}
	\end{itemize}
\end{theorem}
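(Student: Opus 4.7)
My plan is to establish both parts by induction on $k$, maintaining two invariants: the linear decay $\|u^j - y\| \le (1 - t/2)^j \|u^0 - y\|$ for $j = 0, \ldots, k$ and the stability containment $\|\theta^j - \theta^0\| \le 4\kappa_L(\mu\sigma_0)^{-1}\|u^0 - y\|$ that keeps Assumption \ref{con:stableJ} active. Three quantitative ingredients will drive each inductive step. First, Weyl's inequality applied to \eqref{eq:con-stableJ} yields $\sigma_{\min}(J^R(\theta^k)) \ge \sigma_0/2$, which combined with the pseudoinverse formula and the strong-convexity/Lipschitz-gradient bounds gives $\|d^k\| \le 2\kappa_L(\mu\sigma_0)^{-1}\|u^k - y\|$ together with the key cancellation identity $J^R(\theta^k)\, d^k = H_L(u^k)^{-1}\nabla L(u^k, y)$. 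Second, strong convexity of $L(\cdot, y)$ at its minimizer $y$ together with the $\kappa_H$-Lipschitz Hessian yields $\nabla L(u^k, y) = \bar H_L^k (u^k - y)$ for $\bar H_L^k := \int_0^1 H_L(y + s(u^k - y))\, ds$, with $\|H_L(u^k) - \bar H_L^k\| \le \kappa_H\|u^k - y\|/2$. Third, since $u$ is $C^2$ on the compact stability neighborhood and $R = \exp$ is smooth, Taylor's theorem along the exponential map delivers $u^{k+1} - u^k = -tJ^R(\theta^k)\,d^k + R_k$ with $\|R_k\| \le \kappa_R t^2\|d^k\|^2$.

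For part (a), assembling the three ingredients gives
\[ u^{k+1} - y = (1 - t)(u^k - y) + t\bigl(I - H_L(u^k)^{-1}\bar H_L^k\bigr)(u^k - y) + R_k, \]
so that $\|u^{k+1} - y\| \le (1-t)\|u^k - y\| + \bigl(t\kappa_H/(2\mu) + 4\kappa_R t^2 \kappa_L^2/(\mu^2\sigma_0^2)\bigr)\|u^k - y\|^2$. The hypothesis $\|u^0 - y\| < \mu/(3\kappa_H)$ together with the stated step-size upper bound is tuned precisely to force the quadratic term to be at most $(t/2)\|u^k - y\|$, establishing \eqref{eq:linear-uk}. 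To propagate the stability-region invariant to $\theta^{k+1}$, I will use the length-non-increasing property $\|\exp_{\theta^k}(\xi) - \theta^k\| \le \|\xi\|$ of the exponential map on an isometrically embedded submanifold together with a geometric sum:
\[ \|\theta^{k+1} - \theta^0\| \le \sum_{j=0}^{k} t\|d^j\| \le \frac{2\kappa_L}{\mu\sigma_0}\sum_{j=0}^{\infty} t(1 - t/2)^j \|u^0 - y\| = \frac{4\kappa_L}{\mu\sigma_0}\|u^0 - y\|. \]

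For part (b), with $t = 1$ and the Jacobian-Lipschitz assumption \eqref{eq:Lipschitz-Jacobian}, I upgrade the residual expansion to $\nabla L(u^k, y) = H_L(u^k)(u^k - y) + E_k$ with $\|E_k\| \le \kappa_H\|u^k - y\|^2/2$ (a second-order integral remainder using the Hessian-Lipschitz bound). Then the cancellation identity $J^R(\theta^k)\,d^k = H_L(u^k)^{-1}\nabla L(u^k, y)$ yields
\[ u^{k+1} - y = (u^k - y) - H_L(u^k)^{-1}\nabla L(u^k, y) + R_k = -H_L(u^k)^{-1} E_k + R_k, \]
both of whose terms are quadratic in $\|u^k - y\|$, giving $\|u^{k+1} - y\| \le \kappa_q\|u^k - y\|^2$ with $\kappa_q = \kappa_H/(2\mu) + 4\kappa_R\kappa_L^2/(\mu^2\sigma_0^2)$. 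The main technical obstacle is securing the Taylor-expansion bound $\|R_k\| \le \kappa_R t^2\|d^k\|^2$ with an explicit constant: existence follows from smoothness of $u$ and of the exponential map, but controlling $\kappa_R$ quantitatively on the manifold requires a bound on the Riemannian Hessian of $u$, which is precisely what the Lipschitz Jacobian condition in part (b) delivers, thereby letting the quadratic rate be made precise.
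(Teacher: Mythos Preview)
Your overall architecture---induction on $k$, the bound $\|d^k\|\le 2\kappa_L(\mu\sigma_0)^{-1}\|u^k-y\|$ from the pseudoinverse formula plus $\sigma_{\min}(J^R(\theta^k))\ge\sigma_0/2$, the cancellation $J^R(\theta^k)d^k=H_L(u^k)^{-1}\nabla L(u^k,y)$, and the geometric sum for $\|\theta^{k+1}-\theta^0\|$---matches the paper. The divergence is in how you control $u^{k+1}-u^k-(-t\,J^R(\theta^k)d^k)$.

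You bundle this entire discrepancy into a single Taylor remainder $R_k$ with $\|R_k\|\le\kappa_R t^2\|d^k\|^2$. That bound requires the second derivative of $s\mapsto u(c_k(s))$ to be uniformly bounded, i.e.\ $J^R$ locally Lipschitz on the stability region. But part~(a) does \emph{not} assume this---Assumption~\ref{con:stableJ} only gives the constant deviation bound $\|J^R(\theta)-J^R(\theta^0)\|\le\min\{1/2,\mu/(6\kappa_L)\}\sigma_0$, not a Lipschitz bound. Your remark that ``existence follows from smoothness of $u$'' is an extra hypothesis (and one that fails, e.g., for the ReLU networks the paper later treats). The paper instead writes $u^{k+1}-u^k=\int_0^1 J^R(c_k(s))c_k'(s)\,ds$ and splits the integrand into three pieces:
\[
b_1=\int_0^1 J^R(c_k(s))\bigl(c_k'(s)-t\,d^k\bigr)ds,\quad
b_2=t\int_0^1\bigl(J^R(c_k(s))-J^R(\theta^k)\bigr)d^k\,ds,\quad
b_3=t\,J^R(\theta^k)d^k+u^k-y.
\]
The point is that $b_1$ is quadratic in $t$ using only a norm bound on $J^R$ together with the second-order property of the exponential map (the normal component of $c_k'(s)-td^k$ is annihilated by $J^R(c_k(s))$ since its rows are tangent vectors), while $b_2$ is bounded \emph{linearly} in $t$ directly from Assumption~\ref{con:stableJ}, giving $\|b_2\|\le (t/3)\|u^k-y\|$. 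This is why the paper's contraction factor before quadratic corrections is $1-2t/3$ rather than your $1-t$, and why the step-size threshold carries the factor $1/6$ and a single power of $\sigma_0$ exactly as stated. You also need the whole geodesic segment $c_k(s)$, not just its endpoint $\theta^{k+1}$, to lie in the stability ball in order to apply \eqref{eq:con-stableJ} along the path; the paper verifies this explicitly.

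For part~(b) your approach is fine and essentially equivalent: once \eqref{eq:Lipschitz-Jacobian} is assumed, the $b_2$ term becomes quadratic and can be absorbed into your $R_k$, and your $E_k$ estimate reproduces the paper's bound on $b_3$ at $t=1$.
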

\begin{proof}
	(a). We proceed by induction. Assume that for $j \leq k$, we have 
	\[ \|\theta^j - \theta^0\| \leq   4 \kappa_L(\mu\sigma_0)^{-1} \|u^0-y\|, \;\; \|u^j - y\| \leq \left( 1 - \frac{\eta}{2} \right) \|u^{j-1} - y\|. \]
	By the definition of $d^k$ in \eqref{eq:rngd1},
	\begin{equation}\label{eqn:dk}
		\begin{aligned}
			\|d^k\|&\le \|J^R(\theta^k)^\top(J^R(\theta^k)J^R(\theta^k)^\top)^{-1}\|\|H_L(\theta^k)^{-1}\| \|\nabla_{u}L(u^k,y)-\nabla_{u}L(y,y)\| \\
			&\le \mu^{-1}\kappa_L\sigma_{\min}^{-1}(J^R(\theta^k))\|u^k-y\| \\
			& \le 2\kappa_L(\mu\sigma_0)^{-1}\|u^k-y\|,
		\end{aligned}
	\end{equation}
	where the first inequality is due to $\nabla L(y,y) = 0$ and the last inequality is from Assumption \ref{con:stableJ}. Now, define the map $c_k : [0,1] \rightarrow \mathcal{M}$ as $c_k(s) = R_{\theta^k}(- s t d^k)$. Note that for the exponential map $R$, the geodesic distance between $\theta$ and $R_{\theta}(\xi)$ is equal to $\|\xi\|$ \cite[Equation (7.25)]{opt-manifold-book}, and inequality (2.2) holds with $\alpha = 1$ when we take the Euclidean metric as the Riemannian metric on $\mathcal{M}$. Thus, for any $s \in [0,1]$,
	\[ \begin{aligned}
		\|c_k(s) - \theta^0\| & \leq \| c_k(s) - \theta^k \| + \sum_{j=0}^{k-1}\|\theta^{j+1} - \theta^j \| \leq  t \sum_{j=0}^{k}\|d^j\| \\ & \le2 \kappa_L(\mu\sigma_0)^{-1} t\sum_{j=0}^{k}\|u^j-y\|,
	\end{aligned}
	 \]
where the second inequality is due to \eqref{eq:bound-retraction1}. Since $\|u^j-y\| \leq (1 - \frac{\eta}{2})\|u^{j-1} - y\|$ for all $j \leq k$, we have $\|c_k(s) - \theta^0\| \leq 4 \kappa_L(\mu\sigma_0)^{-1} \|u^0-y\| \; \mathrm{for~all~} s \in (0,1]$. This gives $\|\theta^{k+1} - \theta^0 \| \leq 4\mu \kappa_L\sigma_0^{-1} \|u^0-y\|$.
	To prove \eqref{eq:linear-uk}, we split $\|u^{k+1} - y\|$ into three terms, namely,
	\be \label{eq:est-uk}
	\begin{aligned}
		u^{k+1}-y  = & u^{k+1} - u^{k} + u^k - y  = \int_0^1 J^R(c_k(s))c_k^\prime(s){\rm d}s+u^k-y \\
		=&  \underbrace{\int_0^1 J^R(c_k(s))(c'_k(s)- t d^k){\rm d}s}_{b_1} + \underbrace{t\int_0^1 (J^R(c_k(s))-J^R(\theta^k))d^k{\rm d}s}_{b_2} \\ &+ \underbrace{t\int_0^1 J^R(\theta^k)d^k{\rm d}s+u^k-y}_{b_3}.
	\end{aligned}
	\ee
	For the exponential map $R$ \cite[Equation (5.24)]{opt-manifold-book}, it holds that
	\begin{equation} \label{eq:2nd-order-retraction} 
		c'_k(s) - t d^k = c''_k(s)[-s t d^k] + \tilde{\kappa}_R s^2 t^2 \|d^k\|^2,
	\end{equation}
	where $c''_k(s)[- s t d^k]$ belongs to the normal space to $\mathcal{M}$ at $c_k(s)$ and $\tilde{\kappa}_R > 0$ is the smoothness constant. Plugging \eqref{eq:2nd-order-retraction} into \eqref{eq:est-uk}, we have
	\[ \begin{aligned}
		\|b_1\| & \leq \int_{0}^1 (\|J^R(\theta^0)\| + \|J^R(c_k(s)) - J^R(\theta^0)\|) \tilde{\kappa}_R s^2 t^2 \|d^k\|^2 {\rm d}s \\
		& \leq \int_{0}^1 2\sigma_0 \kappa_R  s^2 t^2 \|d^k\|^2 {\rm d}s = \frac{2}{3}\sigma_0 \kappa_R t^2 \|d^k\|^2,
	\end{aligned}
	 \]
	 where $\kappa_R:= \tilde{\kappa}_R \cdot (1/4 +  \|J^R(\theta^0)\|/(2\sigma_0))$. By \eqref{eq:con-stableJ} and \eqref{eqn:dk}, we have
	\[ \|b_2\| \leq t \int_{0}^1 \min\left\{\frac{1}{2},\frac{\mu}{6\kappa_L }\right\}\sigma_0 \cdot  2\kappa_L(\mu\sigma_0)^{-1}\|u^k-y\| {\rm d}s \leq \frac{t}{3}\|u^k - y\|. \]
	Now, the update \eqref{eq:rngd1} yields $J^{R}\left(u^{k}\right)
    d^{k}=H_{L}\left(u^{k}\right)^{-1} \nabla L\left(u^{k}, y\right)$. It follows that
	\bee
	\begin{aligned}
	\|	b_{3}\| &=\|u^{k}-y- t H_{L}\left(u^{k}\right)^{-1}\left(\nabla L\left(u^{k}, y\right)-\nabla L(y, y)\right) \|\\
		&=\|H_{L}\left(u^{k}\right)^{-1}\left(H_{L}\left(u^{k}\right)\left(u^{k}-y\right)-t\left(\nabla L\left(u^{k}, y\right)-\nabla L(y, y)\right)\right) \|\\
		&=\left\| H_{L}\left(u^{k}\right)^{-1}\left(H_{L}\left(u^{k}\right)\left(u^{k}-y\right)-t \int_{0}^{1} H_{L}\left(u^{k}+s\left(y-u^{k}\right)\right)\left(u^{k}-y\right) {\rm d} s \right) \right\| \\
		&=\left \|H_{L}\left(u^{k}\right)^{-1}\left[\int_{0}^{1}\left(H_{L}\left(u^{k}\right)-t H_{L}\left(u^{k}+s\left(y-u^{k}\right)\right)\right) {\rm d} s\right]\left(u^{k}-y\right) \right\| \\
		& \leq \int_{0}^{1}\left(1-t+t \mu^{-1} \kappa_{H} s\left\|u^{k}-y\right\|\right) {\rm d} s \cdot\left\|u^{k}-y\right\| \\
		& = \left(1-t+\frac{\kappa_{H} t}{2 \mu}\left\|u^{k}-y\right\|\right)\left\|u^{k}-y\right\|,
	\end{aligned}
	\eee
	where the first inequality is due to Assumption \ref{assum:L}. 
	Combining the estimates on $b_1, b_2, b_3$, we conclude that
	\begin{equation}\label{eq:4.2.1a}
		\begin{aligned}
			\|u^{k+1}-y\| 
			&\le \left(1-\frac{2t}{3}+\frac{\kappa_H t}{2\mu} \|u^k - y\|\right)\|u^k-y\|+\frac{8}{3}\mu^{-2} \kappa_R\kappa_L^2\sigma_0^{-1}t^2\|u^k-y\|^2 \\
			&\le \left(1-\frac{t}{2} \right)\|u^k-y\|
		\end{aligned}
	\end{equation}
	whenever $\|u^k-y\| < \frac{\mu}{3\kappa_H}$ and $t \leq \left(\frac{1}{6\|u^k - y\|} - \frac{\kappa_H}{2\mu} \right) \cdot \frac{3 \mu^2 \sigma_0} {8 \kappa_R\kappa_L^2}$. Therefore, the inequality \eqref{eq:linear-uk} holds by using the inductive hypothesis $\|u^k-y\|\le\|u^0-y\|$.
	
	
	(b). The proof is similar to that for (a). Substituting $t = 1$ into  \eqref{eq:est-uk}, we obtain 
	\[ \begin{aligned}
		\|u^{k+1}-y\| & \le 
		\frac{\kappa_H}{2\mu}\|u^k-y\|^2 + \frac{1}{2}\kappa_J   \| d^k \|^2 +\frac{8}{3}\mu^{-2} \kappa_R\kappa_L^2\sigma_0^{-1} \|u^k-y\|^2 \\
		& \leq \left[\frac{\kappa_H}{2\mu} +2\kappa_L^2(\mu\sigma_0)^{-2} \left(\kappa_J  +\frac{4}{3}\sigma_0 \kappa_R\right)\right]\|u^k-y\|^2,
	\end{aligned} \]
	where we use \eqref{eq:Lipschitz-Jacobian} to get 
	\[
	\begin{aligned}
&	\left\|\int_0^1 (J^R(c_k(s))-J^R(\theta^k))d^k{\rm d}s \right \|
\\\le&\kappa_J\int_0^1\|c_k(s)-\theta^k\|\|d^k\|{\rm d}s	\le\frac{1}{2}\kappa_J\|d^k\|^2\le2\kappa_J\kappa_L^2 (\mu\sigma_0)^{-2}\|u^k-y\|^2. 
	\end{aligned}	\]
	The verification of the neighborhood condition for $\theta^k$ is similar to that in (a). This completes the proof.
\end{proof}

\subsection{Jacobian stability of two-layer neural network with batch normalization} \label{subsec:jacobian-2layer}
From the previous subsection, we see that the Jacobian stability condition in Assumption \ref{con:stableJ} plays an important role in the convergence rate analysis of the RNGD method. Let us now show that such a condition is satisfied by a two-layer neural network with batch normalization, thereby demonstrating its relevance. The difference between our setting and that of \cite{zhang2019fast} lies in the use of batch normalization. To begin, consider the input-output map $f$ given by 
\begin{equation}
	f(x, \theta, a)=\frac{1}{\sqrt{m}} \sum_{j=1}^{m} a_{j} \phi\left(\frac{\theta_{j}^{\top} (x-\mathbb{E}[x])}{\sqrt{\theta_j^\top V \theta_j}}\right),
\end{equation}
where $x\in\R^{n}$ is the (random) input vector, $V = \mathbb{E}[ (x - \mathbb{E}[x]) (x - \mathbb{E}[x])^\top ]$ is the covariance matrix,
$\theta=[\theta_1^\top, \theta_2^\top, \dots, \theta_m^\top]^\top \in \R^{mn}$ is the weight vector
of the first layer, $a_j\in\R$ is the output weight of
hidden unit $j$, and $\phi$ is the ReLU activation function. This represents a
single-output two-layer neural network with batch normalization. We fix the $a_j$'s throughout as in \cite{zhang2019fast} and apply the RNGD method with
a fixed step size on $\theta$, in which each weight vector $\theta_j$ is assumed to be normalized. 
For the Grassmann manifold ${\rm Gr}(1,n)$, we choose $d$ with $\|d\|=1$ as the representative element of the one-dimensional subspace $\{ cd : c \ne 0 \}$. With a slight abuse of notation, we write ${\rm Gr}(1,n) := \{d \in \R^n : \|d\| = 1 \}$. Then, we can regard the vector $\theta$ as lying on a Cartesian product of $\mbox{Gr}(1,n)$'s.

It is well known that if $\theta_j$ is a standard Gaussian random vector, then the random vector $\theta_j / \|\theta_j\|$ is uniformly distributed on ${\rm Gr}(1,n)$. 
We draw each $\theta_j$ uniformly from ${\rm Gr}(1,n)$ and each $a_j$ uniformly from $\{-1,+1\}$. As mentioned in Section \ref{sec:3.2}, we have $J^R(\theta) = J(\theta)$. Thus, our goal now is to establish the stability of $J$.
To begin, let $\mathcal{S}=\{ (x_i,y_i) \}_{i=1}^N$ denote the dataset and $u(\theta)=[f(x_1, \theta,a),$ $f(x_2, \theta,a),\dots,f(x_N,\theta,a)]^\top$ denote the output vector. Following \cite{du2018gradient,wu2019global,zhang2019fast}, we make the following assumption on $\mathcal{S}$.
\begin{assumption}\label{cond:con3}
	 For any $(x,y) \in \mathcal{S}$, it holds that $\|x\| =1 \text
     { and }\left|y\right|=\mathcal{O}(1)$.  For any $x_i, x_j \in \mathcal{S}_x$ with  $ i \neq j,$ it holds that
     $  x_i\neq \pm x_j .$ In addition, the input vector $x$ satisfies $\mathbb{E}[x] = {\bf 0}$ and the covariance matrix $V = \mathbb{E}[ xx^\top ]$ is positive definite with minimum eigenvalue $\sigma_V > 0$.
\end{assumption}



Motivated by \cite{zhang2019fast}, we use $[x_i^\top \theta_j^0]_{k-}$ to represent the $k$-th smallest entry of $[x_i^\top \theta_1^0, \\x_i^\top \theta_2^0,\dots,x_i^\top \theta_m^0]$ in absolute value. Since $V$ is positive definite and $\mbox{Gr}(1,n) = \{ d \in \mathbb{R}^n : \| d \| = 1 \}$ is compact, for $i=1,\ldots,N$, the function $u \mapsto \varphi_i(u) = \frac{x_i}{\sqrt{u^\top V u}} - \frac{V uu^\top x_i}{(u^\top Vu)^{3/2}} $ is $L$-Lipschitz on ${\rm Gr}(1,n)$ for some constant $L >0$, i.e.,  $\|\varphi_i(u)-\varphi_i(v)\| \le L\|u-v\|$ for any $u, v \in \mbox{Gr}(1,n)$. 
 To prove the desired Jacobian stability result, we need the following lemmas. They extend those in \cite{zhang2019fast}, which are developed for the Euclidean setting, to the Grassmann manifold setting. In what follows, we use $\delta_{A}$ to denote the indicator function of an event $A$, i.e., $\delta_{A}$ takes the value $1$ if the event $A$ happens and $0$ otherwise.
\begin{lemma}\label{lem:con4.1}
	Let $\theta_j, \theta_j^0 \in {\rm Gr}(1,n)$, where $j=1,\ldots,m$, be given. Suppose that for some $k \in \{1,\ldots,m\}$, we have $\left\| \theta-\theta^0\right\| \le\sqrt{k}[x_i^\top \theta_j^0]_{k-}$ for $i=1,2,\dots,N$ and $j=1,2,\dots,m$. Then, we have
	\begin{equation}
		\left\| J(\theta)-J(\theta^0)\right\|^2 \le \frac{2NkM + NkL}{m},
	\end{equation}
	where $M = \max_{i \in \{1,\ldots,N\}}\left(\max_{u\in {\rm Gr}(1,n)}\left\|\frac{x_i}{\sqrt{u^\top Vu}} - \frac{Vuu^\top x_i}{(u^\top Vu)^{3/2}}\right\|^2\right)$.
\end{lemma}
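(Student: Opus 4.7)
The plan is to bound the operator norm $\|J(\theta) - J(\theta^0)\|$ by the Frobenius norm and then estimate the Frobenius norm block-by-block. Using the ReLU structure together with the chain rule, the $(i,j)$-th block (i.e., the $\theta_j$-derivative of $f(x_i,\theta,a)$) is
\[
\frac{\partial f(x_i,\theta,a)}{\partial \theta_j} = \frac{a_j}{\sqrt m}\, \mathbb{1}[\theta_j^\top x_i > 0]\,\varphi_i(\theta_j),
\]
and similarly at $\theta^0$. So the difference between the two Jacobians is driven by (i) activation-pattern flips in the indicator and (ii) Lipschitz perturbations of $\varphi_i$.

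First I would introduce, for each sample index $i$, the ``flipped set''
\[
S_i := \{\, j \in \{1,\ldots,m\} : \mathbb{1}[\theta_j^\top x_i>0] \neq \mathbb{1}[\theta_j^{0\top} x_i>0] \,\}.
\]
For $j \in S_i$, only one of the two indicator terms is active, so the block contribution is at most $M/m$ by definition of $M$ (since $|a_j|=1$). For $j \notin S_i$, both indicators agree, and Lipschitz continuity of $\varphi_i$ on $\mathrm{Gr}(1,n)$ bounds the block contribution by $L^2\|\theta_j-\theta_j^0\|^2/m$. Summing over $(i,j)$,
\[
\|J(\theta)-J(\theta^0)\|^2 \;\le\; \sum_{i=1}^N \frac{|S_i|\,M}{m} \;+\; \sum_{i=1}^N \frac{L^2}{m}\sum_{j \notin S_i}\|\theta_j-\theta_j^0\|^2,
\]
and the second sum is controlled by $N L^2 \|\theta-\theta^0\|^2/m \le N L^2 k\,[x_i^\top\theta_j^0]_{k-}^{2}/m$, which (up to replacing $L^2$ by $L$ after absorbing the $[x_i^\top\theta_j^0]_{k-}^{2}\le 1$ factor, since $\|x_i\|=\|\theta_j^0\|=1$) produces the $NkL/m$ summand in the claimed bound.

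The main obstacle is the combinatorial estimate $|S_i|\le 2k$ for every $i$, which supplies the $2NkM/m$ summand. The idea is: if $j\in S_i$ then the sign of $\theta_j^\top x_i$ differs from that of $\theta_j^{0\top}x_i$, forcing $|\theta_j^{0\top}x_i|\le |\theta_j^\top x_i - \theta_j^{0\top}x_i|\le \|\theta_j-\theta_j^0\|$ (because $\|x_i\|=1$). Split $S_i = A_i \cup B_i$ where $A_i := \{j\in S_i: |\theta_j^{0\top}x_i|\le [x_i^\top\theta_j^0]_{k-}\}$ and $B_i$ is its complement in $S_i$. By definition of the $k$-th smallest value, $|A_i|\le k$. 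For $j\in B_i$ I have $|\theta_j^{0\top}x_i|>[x_i^\top\theta_j^0]_{k-}$, so
\[
|B_i|\,\bigl([x_i^\top\theta_j^0]_{k-}\bigr)^2 \;<\; \sum_{j\in B_i}|\theta_j^{0\top}x_i|^2 \;\le\; \sum_{j\in B_i}\|\theta_j-\theta_j^0\|^2 \;\le\; \|\theta-\theta^0\|^2 \;\le\; k\,\bigl([x_i^\top\theta_j^0]_{k-}\bigr)^2,
\]
forcing $|B_i|<k$. Hence $|S_i|\le 2k-1\le 2k$, which combined with the two bounds above yields the claim. The only delicate point is handling possible ties in the sort defining $[x_i^\top\theta_j^0]_{k-}$, but replacing $<$ by $\le$ in the appropriate places preserves the bound $|S_i|\le 2k$.
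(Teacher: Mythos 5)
Your proposal is correct and follows essentially the same route as the paper's proof: bound the Jacobian difference blockwise, separate the sample--unit pairs whose ReLU activation pattern flips from those whose pattern is preserved, show via the order-statistic hypothesis that at most $2k$ units flip per sample (you do this by a direct split into $A_i\cup B_i$, the paper by contradiction, but the counting is identical), and control the non-flipped blocks by the Lipschitz continuity of $\varphi_i$. The $L$ versus $L^2$ discrepancy you flag in the Lipschitz term is present in the paper's own argument as well, so it is not a gap you introduced.
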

\begin{proof}
	Let $A_{i,j}$ denote the event that the signs of $x_i^\top \theta_j$ and $x_i^\top \theta_j^0$ are different.  We claim that, for $i=1,2,\dots,N$, there are at most $2k$ non-zero entries of $\{ \delta_{A_{i,j}} \}_{j=1}^m$. Otherwise, there exists an $i \in \{1,\ldots,N\}$ such that
	\begin{align*}
		&\|\theta-\theta^0\|^2 \ge \sum_{j=1}^m |x_i^\top \theta_j-x_i^\top \theta_j^0|^2 
		\\	\ge &\sum_{j \in \{j : \delta_{A_{i,j}}= 1\} } |x_i^\top \theta_j-x_i^\top \theta_j^0|^2 	\ge \sum_{j \in \{j : \delta_{A_{i,j}}= 1\} } |x_i^\top \theta_j^0|^2 > k [x_i^\top \theta_j^0]_{k-}^2,
	\end{align*}
	which contradicts our assumption. Now, the generalized Jacobian of $f$ with respect to $\theta$ is given by $$J(\theta) = \frac{1}{\sqrt{m}}\sum_{j=1}^m \sum_{i=1}^N a_j \\ \left[ \delta_{x_i^\top \theta_1 \ge 0} \cdot \varphi_i(\theta_1)^\top , \ldots, \delta_{x_i^\top \theta_m \ge 0} \cdot \varphi_i(\theta_m)^\top \right].$$ When $x_i^\top\theta_j$ and $x_i^\top\theta_j^0$ have the same sign, the difference $\delta_{x_i^\top \theta_j \ge 0} \cdot \frac{a_j}{\sqrt{m}}\varphi_i(\theta_j)- \delta_{x_i^\top \theta_j^0 \ge 0} \cdot \frac{a_j}{\sqrt{m}}\varphi_i(\theta_j^0)$ is either $\mathbf{0}$ or $\frac{a_j}{\sqrt{m}}(\varphi_i(\theta_j)-\varphi_i(\theta_j^0))$. Splitting $\|J(\theta)-J(\theta^0)\|^2$ into two parts according to the event $A_{i,j}$ yields
	\begin{align*}
		&	\| J(\theta)-J(\theta^0)\|^2 \\  \le &\frac{M}{m}\sum_{(x_i,y_i)\in \mathcal{S}}\sum_{j=1}^m\delta_{A_{i,j}}+ \frac{L}{m}\sum_{(x_i,y_i)\in \mathcal{S}}\sum_{j=1}^m\|\theta_j-\theta_j^0\|^2 \\
		\le & \frac{2NkM}{m} +\frac{L}{m}\sum_{(x_i,y_i)\in \mathcal{S}}\|\theta-\theta^0\|^2 \\
		\le & \frac{2NkM+NkL}{m},
	\end{align*}
	where the last inequality follows from the assumption on $\| \theta - \theta^0 \|$ and the fact that $|[x_i^\top \theta_j^0]_{k-}|\le 1$ for $i=1,\ldots,N$ and $j=1,\ldots,m$.
\end{proof}

The next lemma gives an upper bound on the probability of the event $\{|x_i^\top \theta_j |\le \gamma\}$ for all $\gamma > 0$, which will be used to estimate $[x_i^\top \theta_j^0]_{k-}$ in Lemma \ref{lem:con4.2}. 
\begin{lemma}\label{lem:con4.32}
	Let $v$ be uniformly distributed on ${\rm Gr}(1,n)$, $x\in {\rm Gr}(1,n)$ be a given unit-norm vector,  and $\gamma>0$ be a given positive number, where $n\ge2$. Then, we have $\mathbb{P}(|x^\top v|\le\gamma)\le \sqrt{\pi n}\gamma$. Moreover, the dependence on $n$ in the bound is optimal up to constant factors.
\end{lemma}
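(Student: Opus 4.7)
The plan is to exploit rotational invariance first: by replacing $v$ with $Uv$ for an orthogonal $U$ mapping $x$ to $e_1$, I reduce to bounding $\mathbb{P}(|v_1|\le\gamma)$ for $v$ uniform on ${\rm Gr}(1,n)$. A standard spherical-coordinate computation (write $v=(\cos\phi,\sin\phi\cdot w)$ with $w\in{\rm Gr}(1,n-1)$ and $\phi\in[0,\pi]$) then yields the marginal density
\[
f(t)=\frac{\Gamma(n/2)}{\sqrt{\pi}\,\Gamma((n-1)/2)}\,(1-t^2)^{(n-3)/2},\qquad t\in(-1,1).
\]

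For $n\ge 3$, the factor $(1-t^2)^{(n-3)/2}$ is maximized at $t=0$, so $\mathbb{P}(|v_1|\le\gamma)\le 2\gamma f(0)$. I would then apply the Gautschi/Wendel inequality $\Gamma(m+1/2)/\Gamma(m)\le\sqrt{m}$ with $m=(n-1)/2$, obtaining
\[
f(0)\le\frac{1}{\sqrt{\pi}}\sqrt{\tfrac{n-1}{2}}=\sqrt{\tfrac{n-1}{2\pi}},
\]
so that $\mathbb{P}(|v_1|\le\gamma)\le\sqrt{2(n-1)/\pi}\,\gamma\le\sqrt{\pi n}\,\gamma$ for all $\gamma\in(0,1]$. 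For $\gamma\ge 1$ the probability trivially equals $1\le\sqrt{\pi n}\,\gamma$ since $n\ge 2$. The case $n=2$ I would dispatch by a direct calculation on the unit circle: writing $v=(\cos\theta,\sin\theta)$ with $\theta$ uniform on $[0,2\pi)$ gives $\mathbb{P}(|v_1|\le\gamma)=(2/\pi)\arcsin(\min\{\gamma,1\})$, which is bounded by $\sqrt{2\pi}\,\gamma$ on $[0,1]$ by checking the behaviour as $\gamma\to 0^+$ and at $\gamma=1$.

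For the optimality claim, I would combine the matching lower bound $\Gamma(m+1/2)/\Gamma(m)\ge\sqrt{m-1/2}$ (or Stirling) to conclude $f(0)=\Theta(\sqrt{n})$, together with the fact that $(1-t^2)^{(n-3)/2}\ge(1-1/n)^{(n-3)/2}$ is bounded below by an absolute constant on $|t|\le 1/\sqrt{n}$. Restricting to $\gamma\le 1/\sqrt{n}$ then yields $\mathbb{P}(|v_1|\le\gamma)\ge c\sqrt{n}\,\gamma$ for some universal $c>0$, so the $\sqrt{n}$ factor cannot be improved. The only genuinely delicate step in the whole argument is the Gamma-ratio inequality; normalizing the density and the $n=2$ edge case are routine. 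A minor care point is keeping the constant $\sqrt{\pi}$ (rather than the slightly sharper $\sqrt{2/\pi}$ that Gautschi produces) valid across all regimes of $\gamma$ and $n$, which reduces to the elementary inequality $2(n-1)\le\pi^2 n$.
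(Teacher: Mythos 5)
Your proof is correct and follows essentially the same route as the paper: reduce by rotational invariance to the first coordinate of a uniform point on the sphere, use the Beta$(\tfrac12,\tfrac{n-1}{2})$ marginal density, bound it at the origin via a Gamma-ratio inequality, and get the matching lower bound for $\gamma\le 1/\sqrt n$ from the same density. Your treatment is in fact slightly more careful than the paper's, since you separately dispatch the $n=2$ case where $(1-r^2)^{(n-3)/2}$ is unbounded and the "density at $0$" bound does not directly apply.
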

\begin{proof}
	Without loss of generality, we may assume that $x=(1,0,\dots,0)$ since the Euclidean inner product and the distribution of $v$ are invariant under orthogonal transformation. Then, we have $x^\top v = v_1$. Let $Z_1,\dots,Z_n$ be standard Gaussian random variables. Then, the random variable $x^\top v$ has the same distribution as $ B:=\frac{Z_1}{\sqrt{Z_1^2+\dots+Z_n^2}}$. It is well known that $B^2$ follows the distribution $\mbox{Beta}(\frac{1}{2},\frac{n-1}{2})$ \cite[Section 25.2]{johnson1995continuous}. As a result, the density function $h$ of $B$ can be explicitly written as 
	\begin{equation}
		h(r)  = \frac{\Gamma(\frac{n}{2})}{\sqrt{\pi}\Gamma(\frac{n-1}{2})}(1-r^2)^\frac{n-3}{2}, \quad |r|<1.
	\end{equation}
	It follows directly that 
	\begin{equation}
		\begin{aligned}
			\mathbb P(|x^\top v|\le \gamma)=\mathbb P(|B|\le \gamma) =\int_{-\gamma}^{\gamma}h(r)dr\le \frac{\gamma\Gamma(\frac{n}{2})}{\sqrt{\pi}\Gamma(\frac{n-1}{2})}\le \sqrt{\pi n}\gamma,
		\end{aligned}
	\end{equation}
	where the last step uses the classic result $\Gamma(\frac{n}{2})\le \pi\sqrt{n} \Gamma(\frac{n-1}{2})$ in calculus. 

	To see the optimality of the dependence on $n$ in the bound, note that for $\gamma\le \frac{1}{\sqrt{n}}$, we have 
	\[
		\mathbb P(|x^\top v|\le \gamma)=\mathbb P(|B|\le \gamma) =\int_{-\gamma}^{\gamma}h(r)dr\ge \frac{\gamma\Gamma(\frac{n}{2})}{2\sqrt{\pi}\Gamma(\frac{n-1}{2})}\ge \frac{5}{12\sqrt{2e}} \sqrt{n},
	\]
	where the third step uses $(1-r^2)^\frac{n-3}{2}\ge 1-\frac{n-3}{2}r^2$ and the fact that $\gamma \le\frac{1}{\sqrt{n}}$, and the last step follows from an application of Stirling's formula; see, e.g., \cite[Eq. (33)]{so2010}. Hence, the dependence on $n$ in the bound is optimal up to constant factors.
\end{proof}

Using the above lemmas, we show that Assumption \ref{con:stableJ} will hold with high probability.
\begin{lemma}\label{lem:con4.2}
	Let $\theta_j, \theta_j^0 \in {\rm Gr}(1,n)$, where $j=1,\ldots,m$, be given. For any given $Q,\epsilon>0$, if  $\|\theta-\theta^0\|\le Q$, then with probability at least $1-\epsilon$, we will have
	\begin{equation} \label{eq:jacobian-stab}
		\|J(\theta)-J(\theta^0)\|^2\le \frac{2(\pi n)^\frac{1}{3}N^\frac{5}{3}MQ^{\frac{2}{3}}}{\epsilon^\frac{2}{3}m^\frac{1}{3}}+\frac{(\pi n)^\frac{1}{3}N^\frac{5}{3}LQ^{\frac{2}{3}}}{\epsilon^\frac{2}{3}m^\frac{1}{3}}.
	\end{equation}
\end{lemma}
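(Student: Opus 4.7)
My plan is to combine Lemma \ref{lem:con4.1} with the anti-concentration bound supplied by Lemma \ref{lem:con4.32}. Lemma \ref{lem:con4.1} converts the deterministic assumption $\|\theta-\theta^0\|\le Q$ into the desired Jacobian stability estimate as soon as we can find a positive integer $k$ and a threshold $\gamma>0$ with $\sqrt{k}\,\gamma\ge Q$ such that $[x_i^\top\theta_j^0]_{k-}\ge\gamma$ holds simultaneously for every $i=1,\ldots,N$. My strategy is therefore to control, via a single anti-concentration inequality together with a union bound, the joint event that these order statistics are bounded below by $\gamma$, and then to optimize $k$ and $\gamma$ against each other.

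For fixed $i$ and $\gamma>0$, I will introduce the count $Z_i(\gamma):=\sum_{j=1}^m \delta_{\{|x_i^\top\theta_j^0|\le\gamma\}}$. Since the $\theta_j^0$ are independent and uniform on $\mathrm{Gr}(1,n)$ and $\|x_i\|=1$, Lemma \ref{lem:con4.32} yields $\mathbb{E}[Z_i(\gamma)]\le m\sqrt{\pi n}\,\gamma$. Markov's inequality then gives $\mathbb{P}(Z_i(\gamma)\ge k)\le m\sqrt{\pi n}\,\gamma/k$, and a union bound over $i=1,\ldots,N$ shows that, with probability at least $1-Nm\sqrt{\pi n}\,\gamma/k$, we have $Z_i(\gamma)<k$ for every $i$. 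This event is equivalent to $[x_i^\top\theta_j^0]_{k-}>\gamma$ for every $i$, because fewer than $k$ of the $|x_i^\top\theta_1^0|,\ldots,|x_i^\top\theta_m^0|$ being at most $\gamma$ forces the $k$-th smallest of them in absolute value to exceed $\gamma$.

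To close the argument I will take $\gamma:=Q/\sqrt{k}$, which automatically ensures that on the high-probability event above the hypothesis $\sqrt{k}\,[x_i^\top\theta_j^0]_{k-}>\sqrt{k}\,\gamma=Q\ge\|\theta-\theta^0\|$ of Lemma \ref{lem:con4.1} is satisfied. I then pick $k$ so that the failure probability equals $\epsilon$: setting $Nm\sqrt{\pi n}\gamma/k=Nm\sqrt{\pi n}\,Q/k^{3/2}=\epsilon$ yields $k=\bigl(Nm\sqrt{\pi n}\,Q/\epsilon\bigr)^{2/3}$. Substituting this $k$ into the bound $\|J(\theta)-J(\theta^0)\|^2\le(2NkM+NkL)/m$ delivered by Lemma \ref{lem:con4.1} produces exactly the claimed inequality \eqref{eq:jacobian-stab}.

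The only genuine nuisance is that $k$ must be a positive integer, which I will handle by replacing it with $\lceil\bigl(Nm\sqrt{\pi n}\,Q/\epsilon\bigr)^{2/3}\rceil$; the resulting loss is an absolute-constant factor that the stated bound already tolerates. The step I expect to require the most care is the bookkeeping that produces the exact exponents $5/3$ on $N$, $1/3$ on $n$ and $m^{-1}$, and $2/3$ on $Q$ and $\epsilon^{-1}$; once the Markov--union step is carried out cleanly, the rest is just plugging $k$ into Lemma \ref{lem:con4.1}, so the argument is ultimately a transparent combination of the deterministic Lipschitz-type estimate in Lemma \ref{lem:con4.1} with the probabilistic anti-concentration in Lemma \ref{lem:con4.32}.
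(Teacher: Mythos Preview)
Your proposal is correct and follows essentially the same approach as the paper: both use Lemma~\ref{lem:con4.32} to bound $\mathbb{E}\bigl[\sum_j \delta_{|x_i^\top\theta_j^0|\le\gamma}\bigr]$, apply Markov's inequality and a union bound over $i=1,\ldots,N$, and then choose $k=(Nm\sqrt{\pi n}\,Q/\epsilon)^{2/3}$ so that $\sqrt{k}\,[x_i^\top\theta_j^0]_{k-}\ge Q$ and Lemma~\ref{lem:con4.1} applies. The paper phrases the anti-concentration step via the quantile $\gamma_\tau$ with $\gamma_\tau\ge\tau/\sqrt{\pi n}$, while you use the probability bound $\mathbb{P}(|x_i^\top\theta_j^0|\le\gamma)\le\sqrt{\pi n}\,\gamma$ directly, but these are equivalent; your explicit treatment of the integrality of $k$ is, if anything, slightly more careful than the paper's.
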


\begin{proof}
	For given integers $k \in \{1,\ldots,m\}$ and $i\in\left\{1,2\dots,N\right\}$, we prove that with probability at least $1-\epsilon/N$, there will be at most $k - 1$ hidden units $\theta_j^0$ such that $|x_i^\top \theta_j^0|\le\frac{k\epsilon}{Nm \sqrt{\pi n}}$. 
	For $\tau>0$, let $\gamma_\tau$ be the positive number such that $\mathbb{P}(|g|\le\gamma_\tau)=\tau$, where $g$ follows the same distribution as $x_i^\top \theta_j^0$. It follows from Lemma \ref{lem:con4.32} that $\gamma_\tau\ge\frac{1}{\sqrt{\pi n}}\tau$. Let $\tau = \frac{k\epsilon}{Nm}$. Then, we have
	\begin{equation}
		\E\left[\sum_{j=1}^{m} \delta_{\left| x_i^\top \theta_j^0\right| \leq \gamma_{\tau}}\right]=\sum_{j=1}^{m} \mathbb{P}\left[\left|x_i^\top \theta_j^0\right| \leq \gamma_{\tau}\right] \leq \frac{k \epsilon}{N}.
	\end{equation}
	Applying the Markov inequality yields
	\begin{equation}
		\mathbb{P}\left[\sum_{j=1}^{m} \delta_{\left|x_i^\top \theta_j^0\right| \leq \gamma_{\tau}} \geq k\right] \leq \frac{\epsilon}{N}.
	\end{equation}
	Therefore, by taking $k=\frac{Q^\frac{2}{3}m^\frac{2}{3}(\pi n)^\frac{1}{3}N^\frac{2}{3}}{\epsilon^\frac{2}{3}}$, the inequalities $\sqrt{k}[x_i^\top \theta^0]_{k-} \ge\frac{k^\frac{3}{2}\epsilon}{Nm\sqrt{\pi n}} = Q$ will hold simultaneously for $i=1,\ldots,N$ with probability at least $1-\epsilon$. The desired conclusion then follows from Lemma \ref{lem:con4.1}.
\end{proof}

With the help of Lemma \ref{lem:con4.2}, we are now ready to establish the convergence rate of the RNGD method when applied to the two-layer neural network with batch normalization.
\begin{theorem}\label{thm:con3}
	 Suppose that Assumptions \ref{assum:L} and \ref{cond:con3}  hold. Let $\epsilon>0$ be a given constant. Suppose that the number $m$ of hidden units satisfy $$m=\Omega\left(\frac{128(L+2M)^3 \pi n  N^{6}\kappa_L^2}{ \mu^2\sigma_{0}^{8} \sigma_V\epsilon^{3} \min\left\{\frac{1}{2}, \frac{\mu}{6\kappa_L} \right \}^6 }\right),$$ where the constants $L, M, \kappa_L, \mu, \sigma_0, \sigma_V$ are defined previously. 
	 If we draw $\theta_j^0$ uniformly from ${\rm Gr}(1,n)$ and $a_j$ uniformly from $\{-1, +1\}$ for $j=1,2\dots,m$, then the Riemannian Jacobian stability condition in Assumption \ref{con:stableJ} will hold with probability at least $1-\epsilon$.
	Furthermore, when $m\ge \frac{16(L+2M)^3 \pi n N^{5}\kappa_L^2}{ 9 \sigma_{0}^{8} \kappa_H^2 \epsilon^{2} \min\left\{\frac{1}{2}, \frac{\mu}{6\kappa_L} \right \}^6 }$, $\|u^0 - y\| \leq \frac{\mu}{3\kappa_H}$, and $\eta \leq \min\left\{1,\left(\frac{1}{6|\|u^0 - y\|} - \frac{\kappa_H}{2\mu} \right) \cdot \frac{3 \mu^2 \sigma_0} {8 \kappa_R\kappa_L^2}\right\}$, with probability at least $1-\epsilon$, we will have
	\begin{equation} \label{eq:linear}
			\| u^{k+1} - y \| \le \left(1-\frac{1}{2}\eta\right) \|u^k-y\|.
	\end{equation}

\end{theorem}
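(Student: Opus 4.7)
The plan is to combine the high-probability Jacobian stability estimate of Lemma \ref{lem:con4.2} with the deterministic convergence result of Theorem \ref{thm:4.2.1}(a), where the width requirement on $m$ comes from rearranging \eqref{eq:jacobian-stab} so that its right-hand side is bounded by $\min\{1/2,\mu/(6\kappa_L)\}^2\sigma_0^2$. The main work is to pick the correct radius $Q$ in Lemma \ref{lem:con4.2} for each of the two conclusions: in the first part we must use the data-dependent radius $Q = 4\kappa_L(\mu\sigma_0)^{-1}\|u^0-y\|$ prescribed by Assumption \ref{con:stableJ}, which forces an a priori probabilistic bound on $\|u^0 - y\|$; in the second part, since $\|u^0 - y\| \le \mu/(3\kappa_H)$ is assumed, $Q$ is deterministic and no such step is needed.

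To bound $\|u^0-y\|$, first I would exploit the randomness of the output weights. Because the $a_j$'s are i.i.d.\ Rademacher and independent of $\theta^0$, conditioning on $\theta^0$ gives $\mathbb{E}_a[f(x_i,\theta^0,a)]=0$ and $\mathbb{E}_a[f(x_i,\theta^0,a)^2]=\frac{1}{m}\sum_j \phi(\theta_j^{0\top}(x_i-\mathbb{E}[x])/\sqrt{\theta_j^{0\top}V\theta_j^0})^2$. Using $\|x_i\|=1$, $\mathbb{E}[x]=0$, $\|\theta_j^0\|=1$, and $V\succeq \sigma_V I$, the inner argument is bounded by $1/\sqrt{\sigma_V}$, so $\mathbb{E}[f(x_i,\theta^0,a)^2]=O(1/\sigma_V)$. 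Combined with $|y_i|=O(1)$ from Assumption \ref{cond:con3} and summing over $N$ samples, $\mathbb{E}[\|u^0-y\|^2] = O(N/\sigma_V)$. Markov's inequality then yields $\|u^0-y\|^2 \le O(N/(\sigma_V\epsilon))$ with probability at least $1-\epsilon/2$, so $Q^2 = O(\kappa_L^2 N/(\mu^2\sigma_0^2\sigma_V\epsilon))$.

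With this $Q$, I would invoke Lemma \ref{lem:con4.2} at confidence $\epsilon/2$ and demand
\[
\frac{(2M+L)(\pi n)^{1/3}N^{5/3}Q^{2/3}}{(\epsilon/2)^{2/3}m^{1/3}} \;\le\; \min\{1/2,\mu/(6\kappa_L)\}^2\sigma_0^2,
\]
cube both sides, and substitute the bound on $Q^2$. The factors of $\kappa_L^2$, $N$, and $\epsilon^{-1}$ brought in by $Q^2$ combine with the existing $N^5$ and $\epsilon^{-2}$ to produce the stated scaling $N^6$, $\epsilon^{-3}$, and the extra $\sigma_V^{-1}$, while the $\sigma_0^{-6}$ in Lemma \ref{lem:con4.2} becomes $\sigma_0^{-8}$ after absorbing the $\sigma_0^{-2}$ from $Q^2$. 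A union bound over the two $\epsilon/2$ events then gives the Jacobian stability condition (Assumption \ref{con:stableJ}) with probability at least $1-\epsilon$, which is the first claim.

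For the second claim, since $\|u^0-y\|\le \mu/(3\kappa_H)$ is assumed, we can take the deterministic $Q = \frac{4\kappa_L}{3\kappa_H\sigma_0}$, so no Markov step and no $\sigma_V$ factor are needed; redoing the arithmetic with this $Q$ produces exactly the width bound stated for part two, and Assumption \ref{con:stableJ} holds with probability at least $1-\epsilon$. Once this holds, Theorem \ref{thm:4.2.1}(a) applies (with $R$ taken as the exponential map on the product of Grassmannians), and the stated step-size restriction on $\eta$ is precisely the hypothesis of that theorem, yielding the linear decrease \eqref{eq:linear}. The main obstacle is controlling $\|u^0-y\|$ cleanly enough to obtain the sharp $\epsilon^{-3}$ dependence; the key observation that makes this go through is that conditional on $\theta^0$, the Rademacher structure of $a$ kills the cross terms and leaves a second moment that is $O(1)$ rather than $O(m)$ per sample.
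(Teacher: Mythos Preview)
Your proposal is correct and follows essentially the same route as the paper: bound $\mathbb{E}[\|u^0-y\|^2]=O(N/\sigma_V)$ via the Rademacher structure of the $a_j$'s, apply Markov at level $\epsilon/2$, feed the resulting $Q=4\kappa_L(\mu\sigma_0)^{-1}\|u^0-y\|$ into Lemma~\ref{lem:con4.2} at level $\epsilon/2$, and union-bound; for the second claim take the deterministic $Q=4\kappa_L/(3\kappa_H\sigma_0)$ and invoke Theorem~\ref{thm:4.2.1}(a). The only imprecision is cosmetic: the $\sigma_0^{-6}$ you attribute to Lemma~\ref{lem:con4.2} actually arises from cubing the target bound $\min\{1/2,\mu/(6\kappa_L)\}^2\sigma_0^2$, not from the lemma itself, but your final exponent bookkeeping is correct.
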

\begin{proof}
	By Assumption \ref{cond:con3} and the fact that $a_j$ is drawn uniformly from $\{-1,+1\}$, we have $\mathbb{E}\left[u^0\right]=\mathbf{0}$ and 
		\[ \begin{aligned}
		\mathbb{E}\left[(u_j^0)^2\right]& =\mathbb{E}\left[\frac{1}{m}\left(\sum_{j=1}^ma_j\phi\left(\frac{(\theta_j^0)^\top(x-\mathbb{E}[x])}{\sqrt{(\theta_j^0)^\top V\theta_j^0}}\right)\right)^2\right] \\
		& = \mathbb{E}\left[\frac{1}{m}\sum_{j=1}^m\phi\left(\frac{(\theta_j^0)^\top x}{\sqrt{(\theta_j^0)^\top V\theta_j^0}}\right)^2\right] = \mathcal{O}\left(\frac{1}{\sigma_V}\right),\quad j=1,\ldots,N.
		\end{aligned} \]
		This gives
	\begin{equation}
		\mathbb{E}\left[\|u^0-y\|^{2}\right]=\|y\|^{2}+2 y^{\top} \mathbb{E}[u^0]+\mathbb{E}\left[\|u^0\|^{2}\right]=\mathcal{O}\left(\frac{N}{\sigma_V}\right).
	\end{equation}
	Applying the Markov inequality, we see that $\|u^0-y\|^2=\mathcal O\left(\frac{2N}{\epsilon\sigma_V}\right)$ will hold with probability at least $1-\frac{1}{2}\epsilon$.
	This, together with the result of Lemma \ref{lem:con4.2} with $Q = 4 \kappa_L (\mu \sigma_0)^{-1} \| u^0 - y \|$, implies that Assumption \ref{con:stableJ} will hold with probability at least $1-\epsilon$ for $m=\Omega\left(\frac{128(L+2M)^3 \pi n N^{6}\kappa_L^2}{ \mu^2\sigma_{0}^{8} \sigma_V\epsilon^{3} \min\left\{\frac{1}{2}, \frac{\mu}{6\kappa_L} \right \}^6 }\right)$. 
	
	To establish the convergence rate result, observe from Theorem \ref{thm:4.2.1} that $\|\theta^k - \theta^0\| \leq 4 \kappa_L(\mu\sigma_0)^{-1} \|u^0-y\|$ when $\|u^0 - y\| \leq \frac{\mu}{3\kappa_H}$ and $\eta \leq \min\left\{1,\left(\frac{1}{6|\|u^0 - y\|} - \frac{\kappa_H}{2\mu} \right) \cdot \frac{3 \mu^2 \sigma_0} {8 \kappa_R\kappa_L^2}\right\}$. By taking $Q = 4  \kappa_L\sigma_0^{-1}/(3\kappa_H)$ in Lemma \ref{lem:con4.2}, we see that Assumption \ref{con:stableJ} will hold with probability at least $1-\epsilon$ if $m\ge \frac{16(L+2M)^3 \pi n N^{5}\kappa_L^2}{ 9 \sigma_{0}^{8} \kappa_H^2 \epsilon^{2} \min\left\{\frac{1}{2}, \frac{\mu}{6\kappa_L} \right \}^6 }$. Following the proof of Theorem \ref{thm:4.2.1}, we conclude that \eqref{eq:linear} will hold for all $k\ge0$ with probability at least $1-\epsilon$. This completes the proof. 
\end{proof}
\section{Numerical results} \label{sec:numerics}
\subsection{Low-rank matrix completion}
We compare our proposed RNGD method with the Riemannian stochastic gradient
descent (RSGD) method \cite{bonnabel2013stochastic}, the Riemannian stochastic variance-reduced gradient (RSVRG) method \cite{sato2019riemannian}, and the Riemannian conjugate
gradient (RCG) method without preconditioner \cite{boumal2015low}.  All algorithms are initialized by
the QR decomposition of a random $n$-by-$p$ matrix whose entries are generated from the standard Gaussian distribution. We consider two real datasets. One is taken from the Jester joke recommender system,\footnote{The dataset Jester can be downloaded from \url{https://grouplens.org/datasets/jester}} which contains ratings (with scores from $-10.00$ to $+10.00$) of 100 jokes from 24983 users. The other is the movie rating dataset MovieLens-1M,\footnote{The dataset MovieLens-1M can be downloaded from \url{https://grouplens.org/datasets/movielens}} which contains ratings (with stars from $1$ to $5$) of 3952 movies from 6040 users. In the experiments, each dataset is randomly divided into 2 sets, one for training and the other for testing.
We utilize the implementations of RSGD and RSVRG given in the RSOpt package\footnote{The code of RSOpt can be downloaded from \url{https://github.com/hiroyuki-kasai/RSOpt}} and the implementation of RCG given in the Manopt package.\footnote{The code of Manopt can be downloaded from \url{https://github.com/NicolasBoumal/manopt}} The default parameters therein are used.
For RNGD, the same variance reduction technique as that in RSVRG is adopted to update both the estimated gradient and the approximate RFIM \eqref{eq:rfim-lrmc}. Specifically, we compute $a_i(U)$ for all $i$ in each outer iteration and update $a_i(U)$ if the $i$-th sample is used in the estimation of the gradient. We use fixed step sizes for RNGD and RSVRG. For RSGD, the step size $\eta_k$ is set to $\eta_k = \frac{\eta_0}{1+ \eta_0 k/10}$. We search in the set $\{2, 1, 0.5, \ldots, 2 \times 10^{-8}, 10^{-8},  5\times 10^{-9}\}$ to find the best initial step size $\eta_0$ for RSGD and the best step size for RSVRG. The step size for RNGD is set to $0.05$ for both datasets.

Figure \ref{fig:lrmr} reports the mean squared error (MSE) on both the training and testing datasets, which are defined as $\| \mathcal{P}_{\Omega_{\rm{train}}}( UA - X ) \|^2 / | \Omega_{{\rm train}}|$ and $\| \mathcal{P}_{\Omega_{\rm{test}}}( UA - X ) \|^2 / | \Omega_{{\rm test}}|$, respectively, where $\Omega_{{\rm train}}$ and $\Omega_{{\rm test}}$ are the sets of known indices in the training and testing datasets, respectively. The label $\#\mathrm{grad}/N$ on the $x$-axis means the number of epochs, which is defined as the number of cycles through the full dataset. We run all algorithms with a specified number of epochs for different datasets. 
We can see that RNGD converges the fastest among the four methods on both datasets. 
\begin{figure}[htb]
	\centering
	\subfigure{\includegraphics[width=0.45\textwidth]{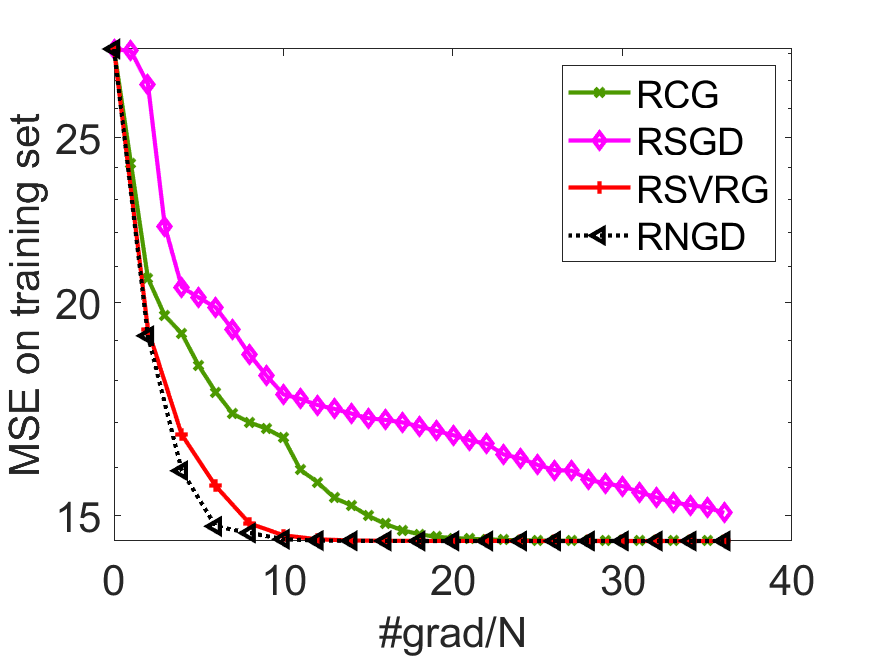}}
	\subfigure{\includegraphics[width=0.45\textwidth]{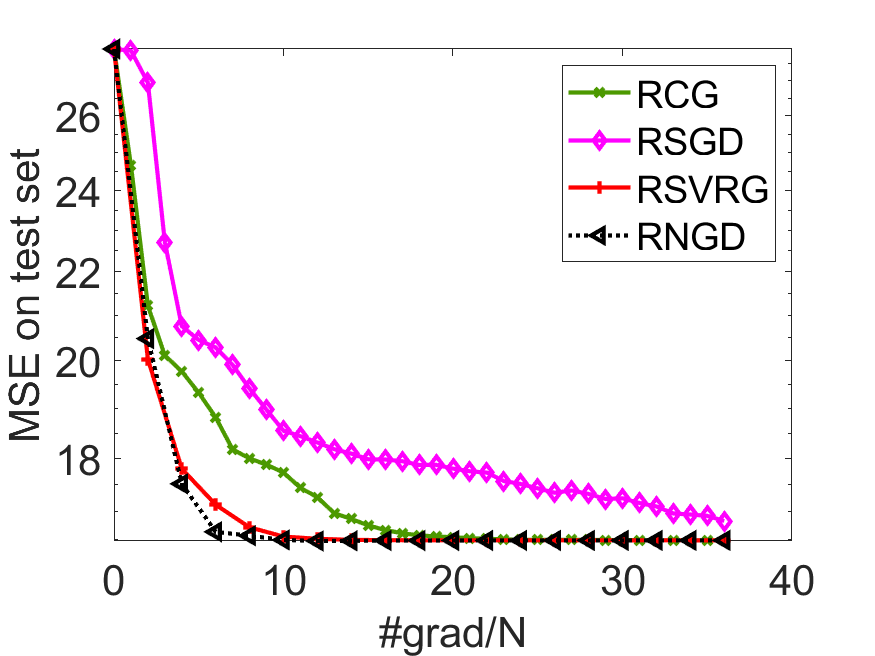}} 
	\subfigure{\includegraphics[width=0.45\textwidth]{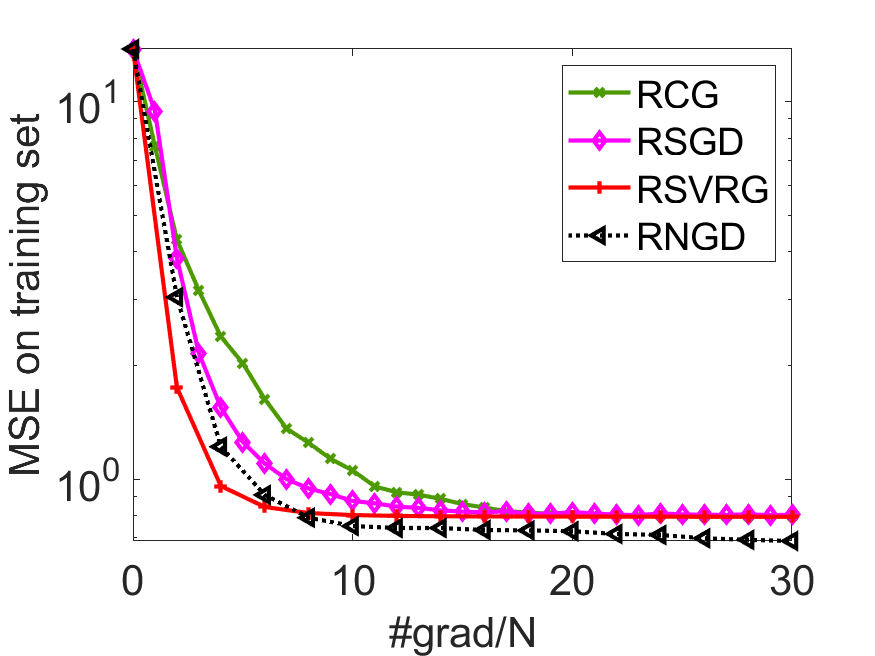}}
	\subfigure{\includegraphics[width=0.45\textwidth]{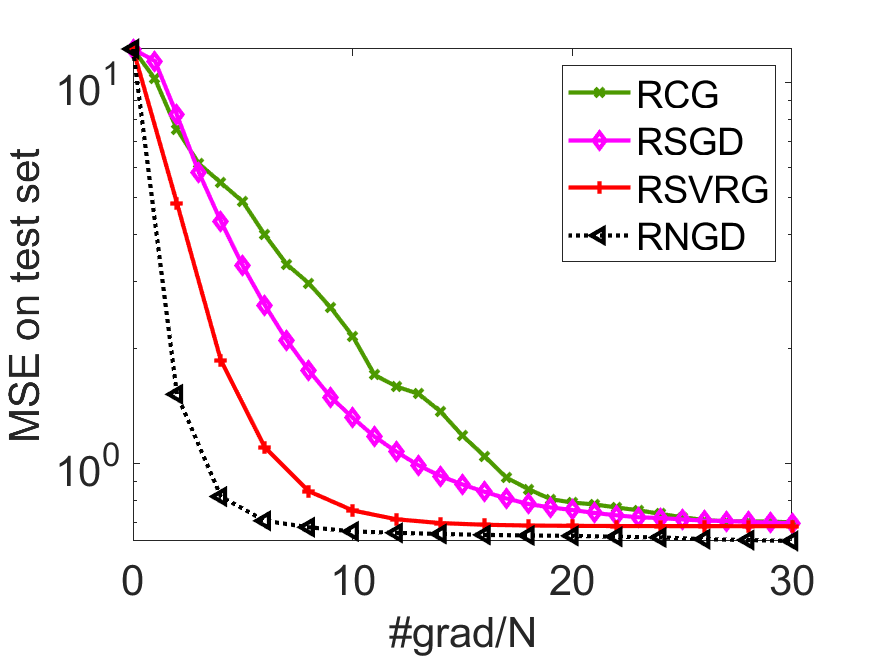}} 
	\caption{Numerical results for LRMC on the Jester dataset (first row) and the MovieLens-1M dataset (second row). \label{fig:lrmr} }
\end{figure}

\subsection{Low-dimension subspace learning}

\begin{figure}[htb]
	\centering
	\subfigure{\includegraphics[width=0.45\textwidth]{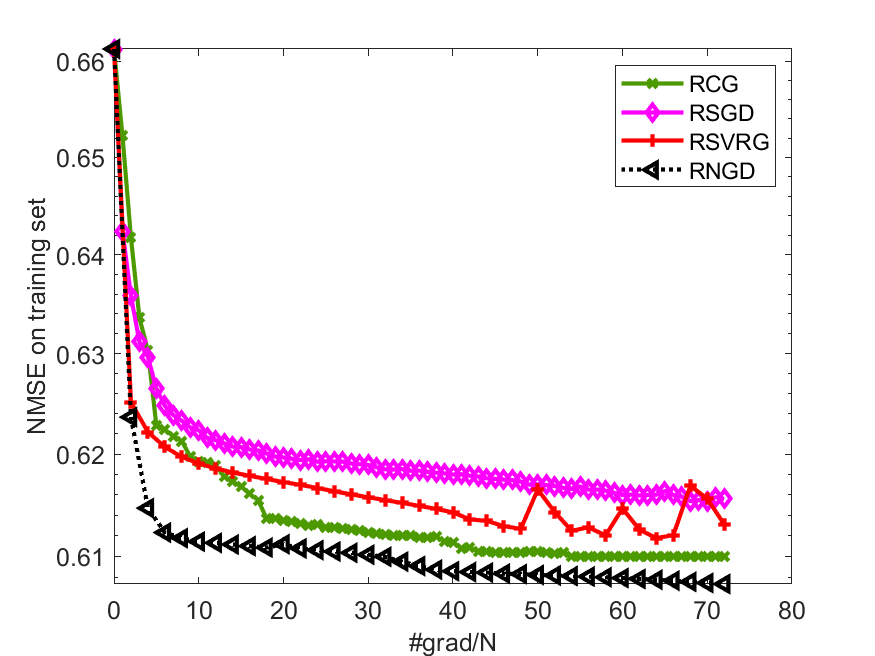}} 
	\subfigure{\includegraphics[width=0.45\textwidth]{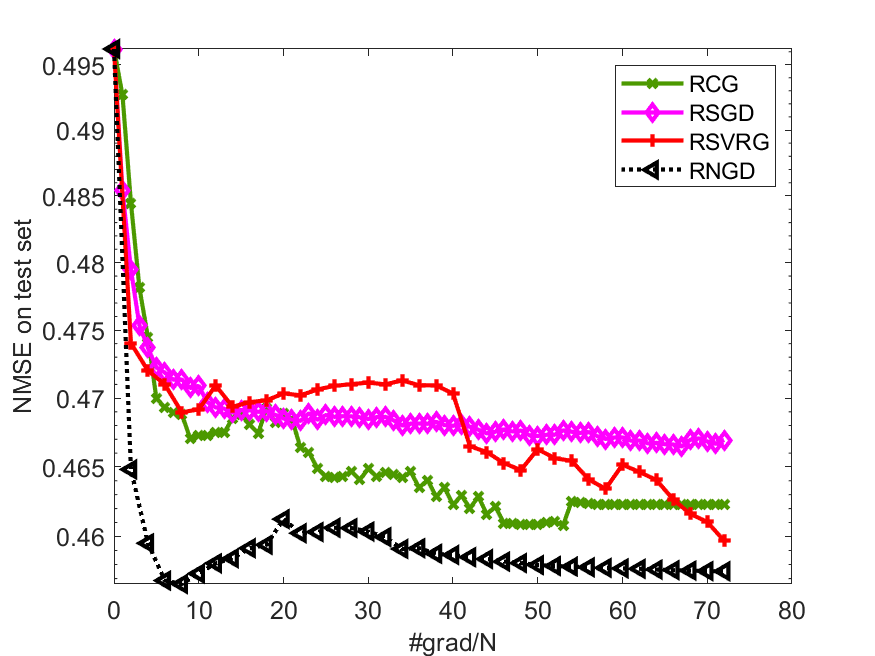}} \\
	\subfigure{\includegraphics[width=0.45\textwidth]{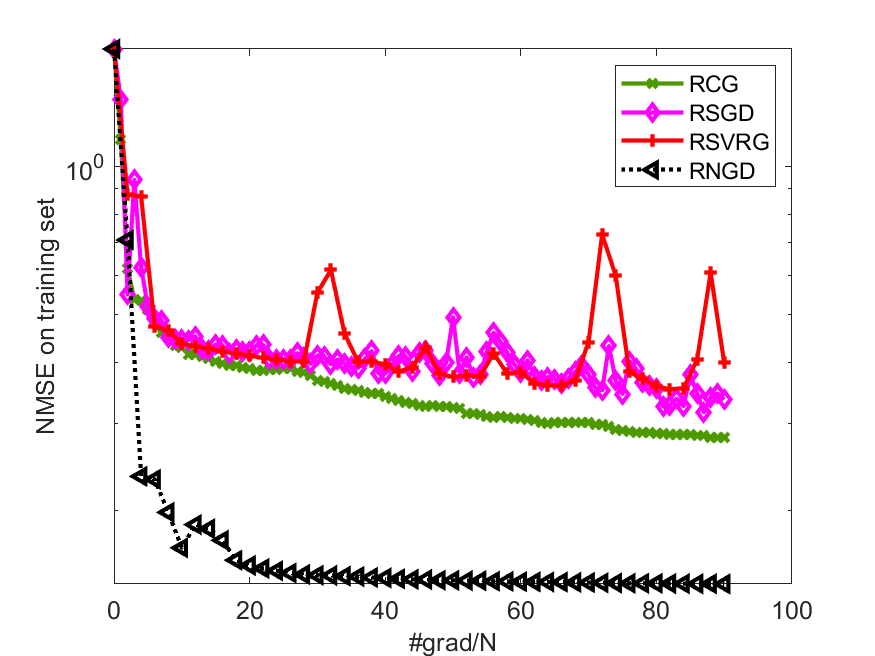}} 
	\subfigure{\includegraphics[width=0.45\textwidth]{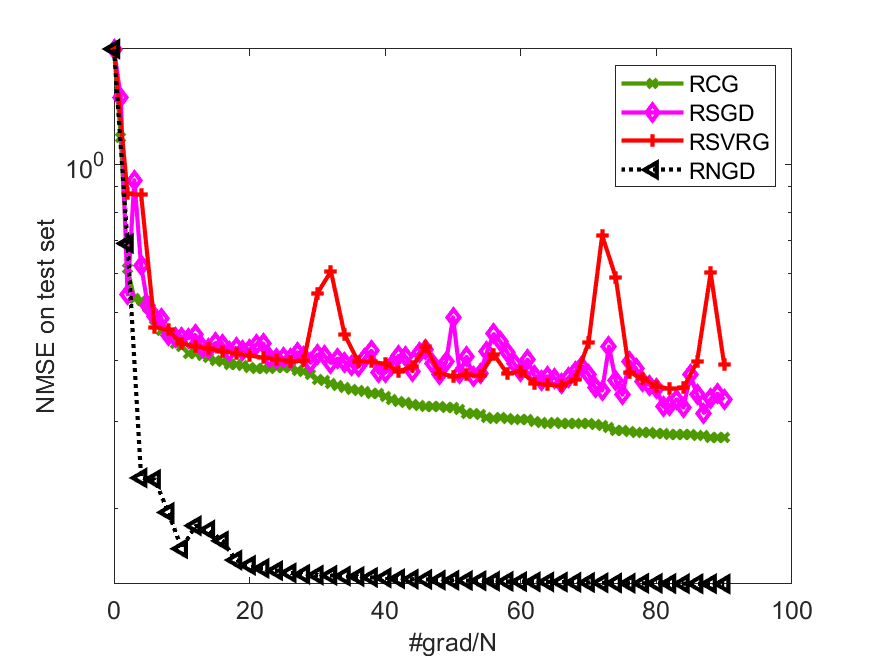}} \\
	
	\caption{Numerical results for multitask learning on the $School$ dataset (first row) and the $Sarcos$ dataset (second row). \label{fig:random} }
\end{figure}

We compare our proposed RNGD with RCG, RSGD, and RSVRG on two real-world datasets: $School$ \cite{goldstein1991multilevel} and $Sarcos$  \cite{vijayakumar2002statistical}. The dimension $p$ is set to be $6$ for both datasets.
 We choose the best step sizes for RSVRG and RSGD from the set $\{1, 0.5, 0.2, 0.1, 0.05, 0.02, \ldots, 10^{-8}, 5$ $\times  10^{-9}, 2 \times 10^{-9}, 10^{-9}\}$. We use the step size 4 (resp., 1) on the $School$ (resp., {\it Sarcos}) dataset for RNGD. All the codes are implemented within the RSOpt framework and the other parameters of the algorithms are set to the default values therein. 

Figure \ref{fig:random} reports the normalized MSE (NMSE) \cite{mishra2019riemannian} on both datasets, which is the mean of the normalized squared error of all tasks.
For both datasets, RNGD returns a point with the lowest NMSE. Especially for the $Sarcos$ dataset, a significant difference in the NMSE between RNGD and other methods is observed. Another noteworthy phenomenon is that RGD and RSVRG tend to be less efficient than RCG. This demonstrates the advantage of using the Fisher information. 

\subsection{Deep Learning}
Batch normalization and momentum-based optimizer are standard techniques to train state-of-the-art image classification models \cite{he2016deep,sandler2018mobilenetv2,simonyan2014very}. 
We evaluate the proposed method with Kronecker-factorized approximate RFIM described in Section \ref{sec:practicalRNGD}, denoted by MKFAC, on VGG16BN \cite{simonyan2014very} and WRN-16-4 \cite{zagoruyko2016wide} while the benchmark datasets CIFAR-10/100 \cite{krizhevsky2009learning} are used. The detailed network structures are described in \cite{simonyan2014very, zagoruyko2016wide}.
In VGG16BN, batch normalization layers are added before every ReLU
activation layer. Additionally, we change the number of neurons in fully connected layers from
4096 to 512 and remove the middle layer of the last three in VGG due to memory
allocation problems (otherwise, one has to compute the inverse of $4096^2$-by-$4096^2$ matrices). This setting is also adopted in \cite{cho2017riemannian, yang2020sketchy}.

The baseline algorithms are SGD, Adam, KFAC \cite{martens2015optimizing}, AdamP, and SGDP \cite{heo2020adamp}. The tangential projections are used to control the increase in norms of the weight parameters in AdamP and SGDP. These methods can be seen as approximate Riemannian first-order methods.
We fine tune the initial learning rates of the baseline algorithms by searching in the set $\{0.5, 0.2,0.1,0.05,0.02,0.01,\dots,5\times 10^{-5},2\times 10^{-5}, 10^{-5}\}.$ The learning rate decays in epoch 30, 60, and 90 with a decay rate $0.1$,  where an epoch is defined as one cycle through the full training dataset. We choose the parameters $\beta_1,\beta_2$ in Adam and AdamP from the set $\{0.9,0.99,0.999\}$. We search in the set $\{0.05,0.1,0.2,0.5,1,2\}$ to determine the damping parameter $\lambda$ used in calculating the natural direction $(F_k+ \lambda I)^{-1}g^k$ and update the KFAC matrix in epoch 30, 60, and 90. The initial damping parameter of KFAC is set to $2$ in all four tasks.
We set the weight decay to $5 \times 10^{-4}$ for all algorithms. Each mini-batch contains 128 samples. The maximum number of epochs is set to $100$ for all algorithms. For MKFAC, we use RNGD for parameters constrained on the Grassmann manifold and SGD for the remaining parameters.
Let $\eta, \eta_g$ denote the learning rates for the Euclidean space and Grassmann
manifold, respectively. For the dataset CIFAR-10, we set $\eta_g=0.25$ and
$\eta=0.05$ with decay rates $0.2$ and $0.1$, respectively. The weight
decay is only applied to the unconstrained weights with parameter $5\times
10^{-4}$. The initial MKFAC damping parameters for WRN-16-4 and VGG16BN are set to 1 and 2 with decay rates $0.8$ and $0.5$, respectively, when the preconditioners update in epoch 30, 60, and 90. For the dataset CIFAR-100, we set $\eta_g=0.3$ for WRN-16-4,
$\eta_g=0.15$ for VGG16BN, and $\eta=0.05$ for both. The learning rate $\eta_g$ has a decay rate
$0.15$ for WRN-16-4 and $0.2$ for VGG16BN, while $\eta$ has a decay rate $0.1$ for
both of them. The initial MKFAC damping parameters for VGG16BN and WRN16-4 are set to 0.5 and 1 with decay rates $0.5$ and $0.8$, respectively. Other settings are the same as KFAC.
 \begin{table}[t]
	\centering
	\caption{Classification accuracy of various networks on CIFAR-10/100 (median of five runs).\label{tab:net}}
	\begin{tabular}{|c|c|c|c|c|}
		\hline Dataset &\multicolumn{2}{c|}{CIFAR-10}&\multicolumn{2}{c|}{CIFAR-100} \\
		\hline Model & WRN-16-4 & VGG16BN & WRN-16-4&VGG16BN \\
		\hline SGD &$93.84$	&$92.88$	&$74.30$	& $71.79$	\\
		\hline SGDP &$93.42$	&$92.49$	& $73.67$	& $71.54$	\\
		\hline Adam &	$92.53$&	 $89.88$&	$71.64$& $62.79$	\\
		\hline AdamP &	$92.55$ &$91.43$	&$71.23$	& $58.88$	\\
		\hline KFAC &$93.90$	&$94.36$&$74.31$	& $76.38$	\\
		\hline MKFAC & \textbf{94.06}	&\textbf{94.76}& \textbf{74.55}		&  \textbf{77.28}	\\
		\hline
	\end{tabular} 	
			
\end{table}

Table \ref{tab:net} presents the comparison of the baseline and the proposed
algorithms on CIFAR-10 and CIFAR-100 datasets. We list the best  classification
accuracy in 100 epochs, where the results are obtained from the median of 5 runs. The
performance of our proposed MKFAC method is the best in all four tasks. Compared with the second-order type method KFAC, our MKFAC method reaches higher accuracy, though KFAC has a much better behavior than SGD on these tasks. Compared with the manifold geometry-based first-order algorithms SGDP and AdamP, we see that using second-order information can give better accuracy than using first-order information alone.
\section{Conclusion}
In this paper, we developed a novel efficient RNGD method for tackling the problem of minimizing a sum of negative log-probability losses over a manifold. Key to our development is a new notion of FIM on manifolds, which we introduced in this paper and could be of independent interest. We established the global convergence of RNGD and the local convergence rate of a deterministic version of RNGD. Our numerical results on representative machine learning applications demonstrate the efficiency and efficacy of the proposed method.


%

\bibliographystyle{siamplain}
\bibliography{ref}

\end{document}